\definecolor{VeryDarkGreen}{rgb}{0,0.18,0.08}
\definecolor{VeryDarkBrown}{rgb}{0.12,0.08,0.04}
\title[Bernstein-Sato polynomials in positive characteristic]{Bernstein-Sato polynomials and test modules in positive characteristic}
\author{Manuel Blickle}
\author{Axel St\"abler}
\DeclareMathOperator{\Supp}{Supp}
\DeclareMathOperator{\Hom}{Hom}
\DeclareMathOperator{\id}{id}
\DeclareMathOperator{\colim}{colim}
\DeclareMathOperator{\End}{End}
\DeclareMathOperator{\Spec}{Spec}
\newcommand{\eps}{\varepsilon}
\begin{document}
\swapnumbers
\theoremstyle{plain}
\newtheorem{Le}{Lemma}[section]
\newtheorem{Ko}[Le]{Corollary}
\newtheorem{Theo}[Le]{Theorem}
\newtheorem*{TheoB}{Theorem}
\newtheorem{Prop}[Le]{Proposition}
\newtheorem*{PropB}{Proposition}
\newtheorem{Con}[Le]{Conjecture}
\theoremstyle{definition}
\newtheorem{Def}[Le]{Definition}
\newtheorem*{DefB}{Definition}
\newtheorem{Bem}[Le]{Remark}
\newtheorem{Bsp}[Le]{Example}
\newtheorem{Be}[Le]{Observation}
\newtheorem{Sit}[Le]{Situation}
\newtheorem{Que}[Le]{Question}
\newtheorem{Dis}[Le]{Discussion}
\newtheorem{Prob}[Le]{Problem}
\newtheorem*{Konv}{Conventions}

\def\cocoa{{\hbox{\rm C\kern-.13em o\kern-.07em C\kern-.13em o\kern-.15em
A}}}

\address{Manuel Blickle\\
Johannes Gutenberg-Universit\"at Mainz\\ Fachbereich 08\\
Staudingerweg 9\\
55099 Mainz\\
Germany}
\email{blicklem@uni-mainz.de}

\address{Axel St\"abler\\
Johannes Gutenberg-Universit\"at Mainz\\ Fachbereich 08\\
Staudingerweg 9\\
55099 Mainz\\
Germany}
\email{staebler@uni-mainz.de}

\date{\today}
\subjclass[2010]{Primary 13A35; Secondary 14F10}

\begin{abstract}
In analogy with the complex analytic case, Musta\c{t}\u{a} constructed (a family of) Bernstein-Sato polynomials for the structure sheaf $\mathcal{O}_X$ and a hypersurface $(f=0)$ in $X$, where $X$ is a regular variety over an $F$-finite field of positive characteristic (see \cite{mustatabernsteinsatopolynomialspositivechar}). He shows that the suitably interpreted zeros of his Bernstein-Sato polynomials correspond to the $F$-jumping numbers of the test ideal filtration $\tau(X,f^t)$. In the present paper we generalize  Musta\c{t}\u{a}'s construction replacing $\mathcal{O}_X$ by an arbitrary $F$-regular Cartier module $M$ on $X$ and show an analogous correspondence of the zeros of our Bernstein-Sato polynomials with the jumping numbers of the associated filtration of test modules $\tau(M,f^t)$ provided that $f$ is a non zero-divisor on $M$.
\end{abstract}

\maketitle

\section*{Introduction}

To keep notation simple in this introduction let $X = \Spec k[x_1,\ldots,x_n]$ be the affine $n$-space over an algebraically closed field $k$. Denote the polynomial ring by $R=k[x_1,\ldots,x_n]$ and fix an equation $f \in R$ defining a hypersurface in $X$.
We denote by $\gamma \colon \Spec R \to \Spec R[t]$ the graph embedding of $f$ given by sending $t$ to $f$.

If $k = \mathbb{C}$ one has the \emph{Bernstein-Sato Polynomial} of $f$ which is an important measure of the singularities of the hypersurface defined by $f=0$. It is defined to be the non-zero monic polynomial of minimal degree among those $b(s) \in k[s]$ such that \[ b(s)f^s=Pf^{s+1} \] for some differential operator $P \in \mathcal{D}_R[s]=k[x_1,\ldots,x_n,\partial_{x_1},\ldots,\partial_{x_n}][s]$.

Kashiwara and Malgrange interpret in \cite{kashiwaravfiltration} and \cite{malgrangevanishingdmodule} the Bernstein-Sato polynomial as the minimal polynomial of the action of the Euler operator $ \frac{\partial}{\partial t}t$ on a graded piece of the $V$-filtration of the $\mathcal{D}_R$-module pushforward $\gamma_+R$ along the graph embedding. In fact, a key point in the work of Kashiwara and Malgrange is the construction of said $V$-filtration in a much more general context, namely for regular holonomic $\mathcal{D}_R$-modules, which they achieve by their theory of $b$-functions, which generalizes the Bernstein-Sato polynomial for a hypersurface equation recalled above. By work of Budur and Saito \cite{budursaitomultiplieridealvfiltration} from the $V$-filtration on the $\mathcal{D}_{R[t]}$-module $\gamma_+R$ one can reconstruct the filtration of multiplier ideals $\mathcal{J}(R,f^t) \subseteq R$ for $0<t\leq 1$. This shows, in particular, that the jumping numbers of the multiplier ideal filtration between $0$ and $1$ are  zeros of the Bernstein-Sato polynomial.

A consequence of the existence of the Bernstein-Sato Polynomial is that the $\mathcal{D}_R$-module $R_f$ is generated by $1/f$  if (and only if) the reduced Bernstein-Sato Polynomial $(x+1)^{-1}b(s)$ does not have negative integral roots \cite{torrelli}. However, if $k$ is a field of positive characteristic $p>0$, then it is shown in \cite{alvarezblicklelybeznik} that the $\mathcal{D}_R$-module $R_f$ is \emph{always} generated by $1/f$. Hence, there cannot be a theory of Bernstein-Sato polynomials in positive characteristic with the same defining property. This observation is just one example for the fact that $\mathcal{D}$-module theory in positive characteristic is quite different from the complex case.
 
However, by taking the interpretation of the Bernstein-Sato polynomial as the minimal polynomial of an action of the Euler operator (due to  Kashiwara and Malgrange) as his point of departure, Musta\c{t}\u{a} defines in \cite{mustatabernsteinsatopolynomialspositivechar} a family of Bernstein-Sato polynomials for a hypersurface $f=0$ over a field of positive characteristic. Contrary to the complex analytic case it is not enough to consider the action of the Euler operator alone, instead one has to also consider all higher divided power Euler operators $\vartheta_i = \partial_t^{[p^i]} t^{p^i}$ at once.\footnote{Note that the order is reversed here. That is, one usually considers $t^{p^i} \partial^{[p^i]}_t$. We will be able to use this standard convention once we switch to right modules.}

More precisely, for $e \geq 1$ let $M^e_f$ be the $\mathcal{D}^e_R[\vartheta_1, \ldots, \vartheta_{p^{e-1}}]$-module generated by the image of $\gamma_\ast R$ in $\gamma_+ R$, where $\mathcal{D}^e_R$ is the subring consisting of those differential operators which are linear over $R^{p^e}$. The Euler operators $\vartheta_i$ act on the quotient $M^e_f/t M^e_f$ for $1 \leq i \leq e-1$ with eigenvalues in $\mathbb{F}_p$. The $e$th \emph{Bernstein-Sato polynomial} as introduced by Musta\c{t}\u{a} encodes the common eigenvalues of these operators. Musta\c{t}\u{a} proved furthermore that the information of these eigenvalues (suitably lifted to $\mathbb{Q}$) is equivalent to the data of the $F$-jumping numbers of the test ideal filtration $\tau(R,f^t)$ of $f$ in the range $(0,1]$. As the test ideal can be viewed as a positive characteristic analog of the multiplier ideal this statement is a characteristic $p$ version of the result of Budur and Saito that the jumping numbers of the multiplier ideal are zeroes of the classical Bernstein-Sato polynomial as alluded to above.

Work of the second named author in \cite{staeblertestmodulnvilftrierung} suggests that in positive characteristic the test module filtration itself is a suitable analog of the $V$-filtration: For one thing there is a certain axiomatic characterization of the test module filtration similar to that of the $V$-filtration but also different in the sense that the action of the differential operators is replaced by a (right) action of the Frobenius. Furthermore, a certain associated graded piece of the test module filtration corresponds, via an analogue of the Riemann-Hilbert correspondence, to a functor on perverse constructible sheaves of $\mathbb{F}_p$-vector spaces that has several of the desirable properties of nearby cycles in the $\ell \neq p$-case. This relationship between nearby cycles and $\mathcal{D}$-modules in characteristic $0$ was the motivation behind the construction of the $V$-filtration for holonomic $\mathcal{D}$-modules as a way to realize the nearby cycles functor for constructible $\mathbb{C}$-sheaves on the $\mathcal{D}$-module side.

What we achieve in the present paper is to also generalize Musta\c{t}\u{a}'s theory of Bernstein-Sato polynomials to this more general context where the test module filtration is defined and well behaved as in \cite{staeblertestmodulnvilftrierung}. In order to state our results let us recall some background on Cartier modules and their test modules from \cite{blicklep-etestideale}.

Let us from now on assume that $R$ is an $F$-finite Noetherian ring of positive characteristic $p$. A Cartier module $M$ (over $R$) is an $R$-module together with an $R$-linear map $\kappa: F_\ast M \to M$, where $F: R \to R$ is the absolute Frobenius given by $x \mapsto x^p$. A Cartier submodule of $M$ is an $R$-submodule $N$ such that $\kappa(N) \subseteq N$. We say that $M$ is \emph{$F$-pure} if $\kappa$ is surjective. We call $M$ \emph{$F$-regular} if $M$ is $F$-pure and if for any Cartier submodule $N$ of $M$ which after localizing at every generic point of $\Supp M$ agrees with $M$ we have $N = M$.

Let $M$ be an $F$-regular Cartier module which as an $R$-module is finitely generated. Let $f$ be a non zero-divisor on $M$. Then the \emph{test module} with respect to the ideal $(f) \subseteq R$ and $t \in \mathbb{R}_{\geq 0}$ is
\[
\tau(M, f^t) = \sum_{e \geq 1} \kappa^e f^{\lceil tp^e\rceil} f \underline{M}_\mathcal{C},
\]
where $\mathcal{C}$ is the algebra generated for $n \geq 1$ by the $\kappa^n f^{\lceil tp^n\rceil}$ and $\underline{M}_\mathcal{C} = (\mathcal{C}_+)^h M$ for all $h \gg 0$ is the stable image (cf. \cite[Proposition 2.13]{blicklep-etestideale}). It follows from \cite[Theorem 3.11]{blicklep-etestideale}, \cite[Lemma 3.1]{staeblertestmodulnvilftrierung} that the definition we give here is in fact equivalent to the definition of test modules in \cite{blicklep-etestideale}. Moreover, we will simplify the description of the test module in Section \ref{SectionTestDmodules}.

Test modules form a decreasing filtration of $R$-submodules of $M$, i.\,e.\ $\tau(M, f^s) \subseteq \tau(M, f^t)$ for $s \geq t$. This filtration is \emph{right continuous}, that is, for $\eps \ll 1$ one has $\tau(M, f^t) = \tau(M,f^{t + \eps})$. An element $t \in \mathbb{Q}$ such that for all $\eps > 0$ one has $\tau(M, f^t) \neq \tau(M, f^{t -\eps})$ is called an \emph{$F$-jumping number (of the test module filtration along $f$)}. Test module filtrations satisfy the so-called Brian\c{c}on-Skoda theorem, namely for any $t \geq 1$ one has $\tau(M, f^{t}) = f \tau(M, f^{t-1})$. In particular, it suffices to control the $F$-jumping numbers in the range $(0,1]$. Moreover, if $R$ is essentially of finite type over an $F$-finite field then the set of $F$-jumping numbers in $(0,1]$ is finite (\cite[Corollary 4.19]{blicklep-etestideale}) and all $F$-jumping numbers are rational (the rationality is a formal consequence of the finiteness similar to the argument in \cite[Theorem 3.1]{blicklemustatasmithdiscretenessrationality}).

Similar to Musta\c{t}\u{a}'s approach in \cite{mustatabernsteinsatopolynomialspositivechar} we use the graph embedding along the fixed hypersurface $f=0$ to define a family of Bernstein-Sato polynomials $b^e_{f,M}(s) \in \mathbb{Q}[s]$. This willl be done by exploiting a system of \emph{right} $\mathcal{D}^e_R$-modules which arises from the Cartier module structure of $M$. This will be explained in the following sections. Our main result can now be stated as follows:

\begin{TheoB}[Theorem \ref{MainResult}]
Let $R$ be regular essentially of finite type over an $F$-finite field. Let $(M, \kappa)$ be an $F$-regular Cartier module and $f \in R$ a non zero-divisor on $M$. The roots of the Bernstein-Sato polynomials $b^e_{M,f}(s)$ are given for $e$ sufficiently large by $\frac{\lceil \lambda p^e \rceil -1}{p^e}$, where $\lambda$ varies over the $F$-jumping numbers of the test module filtration $\tau(M, f^t)$ for $t \in (0,1]$.
\end{TheoB}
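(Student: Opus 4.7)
The overall plan is to follow Musta\c{t}\u{a}'s blueprint from \cite{mustatabernsteinsatopolynomialspositivechar} while working throughout in the right-module setting that is natural for the Cartier action $\kappa \colon F_\ast M \to M$. After setting up a right $\mathcal{D}^e_{R[t]}$-module analogue of $\gamma_+ M$, I would isolate inside it the sub-$\mathcal{D}^e_R[\vartheta_1,\ldots,\vartheta_{e-1}]$-module $M^e_{f,M}$ generated by the image of $M$ and define $b^e_{f,M}(s)$ via the common action of the divided power Euler operators $\vartheta_i = t^{p^i}\partial_t^{[p^i]}$ on $M^e_{f,M}/M^e_{f,M}\cdot t$, with roots obtained from the $\mathbb{F}_p$-eigenvalues by the same lift to $\mathbb{Q}$ that Musta\c{t}\u{a} uses.

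The heart of the argument is to establish a decomposition of the form
\[
M^e_{f,M}/M^e_{f,M}\cdot t \;\cong\; \bigoplus_{j=0}^{p^e-1} V_j,
\]
in which each $V_j$ is a simultaneous eigenspace for the $\vartheta_i$ with eigenvalues read off from the base-$p$ digits of $j$, and in which $V_j$ is identified with the graded piece $\tau(M, f^{(j+1)/p^e-\eps})/\tau(M, f^{(j+1)/p^e})$ of the test module filtration along $f$. Granted this identification, $V_j \neq 0$ is equivalent to $\lambda := (j+1)/p^e$ being an $F$-jumping number of $\tau(M,f^t)$ in $(0,1]$, and the corresponding root is $j/p^e = (\lceil \lambda p^e \rceil - 1)/p^e$, which is precisely the formula claimed in the theorem.

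The main technical obstacle will be to pin this identification down on the nose. The key enabling tool is the simplified description of $\tau(M,f^t)$ promised in Section \ref{SectionTestDmodules}: once $\tau(M,f^t)$ is rewritten in terms of a single iterate $\kappa^e(f^{\lceil tp^e\rceil}\cdot -)$ for $e$ large, the Cartier action on representatives of the form $u \otimes f^{-1} \in M^e_{f,M}$ can be compared directly with the filtration level, and a base-$p$ counting argument then pairs each $V_j$ with the appropriate graded piece. The $F$-regularity of $M$ and the non zero-divisor hypothesis on $f$ are used, respectively, to ensure that the sum defining $\tau$ stabilizes and that $M \hookrightarrow \gamma_+ M$ is injective with well-behaved $t$-torsion. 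The subtle point is to verify, for one particular $e$ sufficiently large, that no two distinct $F$-jumping numbers in $(0,1]$ collide modulo $\tfrac{1}{p^e}$ and that every surviving $V_j$ contributes a distinct root; here the finiteness of $F$-jumping numbers in $(0,1]$ provided by \cite[Corollary 4.19]{blicklep-etestideale} lets us choose $e$ large enough so that the bijection between surviving summands and jumping numbers holds exactly, which finishes the proof.
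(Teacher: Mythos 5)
Your proposal follows essentially the same route as the paper: pass to the right $\mathcal{D}^e_{R[t]}$-module generated by the image of $M$ under the graph embedding, decompose the quotient by $t$ into simultaneous eigenspaces for the Euler operators indexed by $j\in\{0,\dots,p^e-1\}$, and identify each eigenspace with a graded piece of the test module filtration using the single-iterate description $\tau(M,f^{m/p^e})=\kappa^e(f^{m}M)$ for $e\gg0$ (this is the content of Lemma \ref{TestModuleComputationHypersurface}, Lemma \ref{InducedDModule}, Proposition \ref{EigenspaceIsomorphism} and Corollary \ref{QuotientEigenspaceIsomorphism}). One point should be stated more carefully: the eigenspace $V_j$ is identified with ${F^e}^!\bigl(\tau(M,f^{j/p^e})/\tau(M,f^{(j+1)/p^e})\bigr)$, a width-$\tfrac{1}{p^e}$ graded piece, so $V_j\neq 0$ detects an $F$-jumping number somewhere in the interval $(j/p^e,(j+1)/p^e]$ --- not that $(j+1)/p^e$ itself is a jumping number, as your ``$\tau(M,f^{(j+1)/p^e-\eps})$'' phrasing suggests; with this correction the formula $j/p^e=(\lceil\lambda p^e\rceil-1)/p^e$ follows directly and the extra ``no collision modulo $\tfrac1{p^e}$'' step becomes unnecessary.
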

The crucial point is that the a priori infinite collection of Bernstein-Sato polynomials $b^e_{M,f}(s)$ for $e \geq 0$ is completely determined by a finite collection of rational numbers, namely the jumping numbers of the test module filtration attached to $(M,f)$.

In conclusion we would like to draw the reader's attention to the recent work of Stadnik \cite{stadnikbernsteinsatopoly} who also addresses the problem of extending Musta\c{t}\u{a}'s Bernstein-Sato polynomials to a more general context. Stadnik, however, works in the context of Emerton and Kisin's category of unit $R[F]$-modules \cite{emertonkisinrhunitfcrys}. In order to prove his existence result for $b$-functions he essentially has to reconstruct a theory of test ideals in this context, which he coins \emph{list-test-ideals} in \cite{stadnikbernsteinsatopoly}. It was one motivation of the authors of the present paper to point out that, by working in the essentially equivalent theory of Cartier modules (more precisely Cartier crystals, see \cite[Section 5.2]{blickleboecklecartierfiniteness}) one can rely on the already existing theory of test modules. By further replacing left $\mathcal{D}_R$-modules by right $\mathcal{D}_R$-modules there is a natural construction of the Bernstein-Sato polynomials with the desired link to the $F$-jumping numbers of the test module filtration. We show in Section \ref{stadniksbfunctions} that Stadnik's $b$-functions are precisely the limit over our Bernstein-Sato polynomials $b^e_{M,f}(s)$.

\subsection*{Acknowledgements}
Both authors were supported by SFB/Transregio 45 Bonn-Essen-Mainz financed by Deutsche Forschungsgemeinschaft. The idea for this paper was conceived while the second named author was a guest of the University of Michigan. In particular, he would like to thank Mircea Musta\c{t}\u{a} and Karen Smith for making this visit possible.

\section{$\mathcal{D}$-modules in positive characteristic}

Throughout this article we assume all rings to contain a field of prime characteristic $p > 0$.
The absolute Frobenius homomorphism given by sending $r \mapsto r^p$ is denoted by $F: R \to F_\ast R$. For an $R$-module $M$ we denote by $F^e_\ast M$ the $R$-module whose underlying abelian group is $M$ but with multiplication given by $r \cdot m = r^{p^e} m$. The ring $R$ is called $F$-finite if $F_\ast R$ is a finite $R$-module; in other words, the Frobenius morphism on $\Spec R$ is a finite map.

Given a ring $R$ we denote by $\mathcal{D}_R$ the ring of (absolute) $\mathbb{Z}$-linear differential operators in the sense of Grothendieck \cite{EGAIV-4}. Given a polynomial ring $R[t]$, we write $\partial_t^{[m]} \colon R[t] \to R[t]$ for the $R$-linear differential operator which sends $t^n$ to $\binom{n}{m} t^{n-m}$ with the usual convention that $\binom{n}{m} = 0$ for $m > n$. We introduce the notation
\[
    \theta_m = t^m \partial_t^{[m]} \qquad \qquad \vartheta_m = \partial_t^{[m]}t^m
\]
for the $R$-linear operators which are given by sending $t^n \mapsto \binom{n}{m}t^n$ and $t^n \mapsto \binom{n+m}{m}t^n$ respectively. The operator $\theta_m$ is called the \emph{(divided power) Euler operator of order $m$}.

As $R$ is a ring of prime characteristic $p>0$ one has the $p$-filtration of its ring of differential operators, see \cite{chasepfiltration}. The ring of (absolute) differential operators $\mathcal{D}_R$ is the direct limit of rings
\[
    \mathcal{D}^e_R \cong \End_R(F^e_\ast R),
\]
called the differential operators of level $e$. Indeed, the inclusion from $\mathcal{D}^e_R \to \mathcal{D}^{e+1}_R$ is the composition of the natural map $\End_R(F_\ast^e R) \to F_\ast \End_R(F_\ast^e R)$ followed by $F_\ast \End_R(F_\ast^e R) \to \End_R(F_\ast^{e+1} R)$. The direct limit over these maps yields $\mathcal{D}_R$.

If one uses $R^p \subseteq R$ instead of $R \to F_\ast R$ then one obtains the more familiar but equivalent description $\mathcal{D}^e_R = \End_{R^{p^e}}(R)$ and the union over these is $\mathcal{D}_R$. Also note that if $R[t]$ is a polynomial ring over $R$ then we have an inclusion $\mathcal{D}_R \to \mathcal{D}_{R[t]}$. Indeed, $F_\ast R[t] = \bigoplus_{i=0}^{p^e -1} F_\ast R t^i$ and given $P \in \mathcal{D}^e_R$ we have an extension $P'$ by sending $b t^i$ to $P(b) t^i$ for $i \geq 0$.

We will denote by Mod-$\mathcal{D}^e_R$ the category of right $\mathcal{D}^e_R$-modules and by Mod-$R$ the category of right $R$-modules.

We recall a theorem of Lucas (\cite[Section XXI]{lucasbinom}) which states that given natural numbers $n, m$ with $p$-adic expansions $n = \sum_{i=0}^s a_i p^i$ and $m = \sum_{i=0}^s b_i p^i$ with $a_i, b_i \in \{0, \ldots, p-1\}$ one has \[ \binom{n}{m} = \prod_{i=0}^s \binom{a_i}{b_i}\quad \text{mod } p.\]
In particular, it is a crucial ingredient in some proofs of the following relations among the differential operators in positive characteristic which we recall for the convenience of the reader.
\begin{Le}
\label{DiffOpFormulae}
Let $R$ be a regular and $F$-finite ring and $R[t]$ the polynomial ring over $R$ in one variable. Then the following hold:
\begin{enumerate}[(a)]
 \item $[\partial_t^{[p^i]}, t^{p^i}] = 1$ which just means $\vartheta_i = 1 + \theta_i$.
\item$\frac{(sr)!}{(s!)^r} \partial_t^{[sr]} = (\partial_t^{[s]})^r$.
\item $\prod_{j=1}^r(\theta_{p^e} + j) = (\partial_t^{[p^e]})^r (t^{p^e})^r$.
\item $[t, \theta_{p^i}] =  -\theta_{p^i -1} t -t$ for all $i$.
\item$[\theta_i, \theta_j] = 0$ for all $i,j$.
\item $[t, \theta_m] = -mt - t \sum_{j=0}^{m-1} \theta_j$.
\item $\theta_m \in \mathcal{D}^e_R[\theta_1, \theta_p, \ldots, \theta_{p^{e-1}}]$ for all $m < p^e$.
\item $\mathcal{D}^e_R[\theta_1, \ldots, \theta_{p^{e-1}}]t = t \mathcal{D}^e_R[\theta_1, \ldots, \theta_{p^{e-1}}]$.
\end{enumerate}
\end{Le}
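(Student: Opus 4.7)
The proof plan is that every claim in this lemma is a ring identity in $\mathcal{D}_{R[t]}$ and, since the action of $\mathcal{D}_{R[t]}$ on $R[t]$ is faithful, can be verified by applying both sides to an arbitrary monomial $t^n$ and comparing scalars. The only tools needed are the defining formula $\partial_t^{[m]}(t^n) = \binom{n}{m}t^{n-m}$, the Leibniz rule $\partial_t^{[m]}(xy) = \sum_{i+j=m}\partial_t^{[i]}(x)\partial_t^{[j]}(y)$ (in particular its operator form $\partial_t^{[m]}\cdot t = t\partial_t^{[m]} + \partial_t^{[m-1]}$), and the Lucas congruence for binomial coefficients modulo $p$ just recorded. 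Each claim then collapses to a congruence between binomials mod $p$.

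Parts (a), (b), (e) are direct. For (a), evaluation on $t^n$ reduces the statement to $\binom{n+p^i}{p^i} - \binom{n}{p^i} \equiv 1 \pmod p$, which follows from a short Lucas case analysis depending on whether the $i$-th base-$p$ digit of $n$ saturates and triggers a chain of carries. For (b) both sides send $t^n$ to $\frac{n!}{(s!)^r(n-sr)!}t^{n-sr}$, which also equals $\frac{(sr)!}{(s!)^r}\binom{n}{sr}t^{n-sr}$. For (e), each $\theta_i$ is diagonal in the basis $\{t^n\}$, so any two $\theta_i,\theta_j$ commute. Part (c) I would prove by induction on $r$ with (a) as base case: writing $(\partial_t^{[p^e]})^r(t^{p^e})^r = \partial_t^{[p^e]}\cdot\prod_{j=1}^{r-1}(\theta_{p^e}+j)\cdot t^{p^e}$ by induction, one commutes $t^{p^e}$ past the diagonal factors $\theta_{p^e}+j$ and applies $\partial_t^{[p^e]}t^{p^e} = 1 + \theta_{p^e}$ from (a) to produce the extra factor $\theta_{p^e}+r$.

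Parts (d) and (f) both spring from the direct computation $[t,\theta_m] = -t^m\partial_t^{[m-1]}$, which follows immediately from the Leibniz consequence above; the stated shapes are then algebraic rewrites using the definitions of $\theta_j$ together with Pascal's identity and Lucas to re-express $\binom{n+1}{m}$ (respectively $\binom{n+1}{p^i}$) modulo $p$ as a combination of the eigenvalues of the $\theta_j$ listed on the right-hand side. For (g), one writes $m = \sum_{j=0}^{e-1}a_j p^j$ in base $p$ (valid since $m < p^e$) and applies (b) iteratively to express $\partial_t^{[m]}$ as a scalar (a unit in $\mathbb{F}_p$ by Lucas) times $\prod_j(\partial_t^{[p^j]})^{a_j}$; combining with $t^m = \prod_j(t^{p^j})^{a_j}$ and reordering via (d) places $\theta_m$ in $\mathcal{D}^e_R[\theta_1,\theta_p,\ldots,\theta_{p^{e-1}}]$. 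Finally, (h) follows from (d) by induction on monomial length in the $\theta_{p^j}$: the commutator of $t$ with each generator lies in $\mathcal{D}^e_R[\theta_1,\ldots,\theta_{p^{e-1}}]\cdot t$, so $t$ can be shuttled through every element of the subring, yielding equality of the two principal ideals. The main obstacle throughout is simply keeping the $p$-adic bookkeeping straight in Lucas when multiple carries occur; no individual congruence is deep.
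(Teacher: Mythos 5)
Your overall strategy --- verifying each identity on the monomial basis $t^n$ using Lucas' theorem and the Leibniz rule --- is sound, and it is precisely the method behind the results of Musta\c{t}\u{a} that the paper's own proof merely cites; parts (a), (b), (c) and (e) of your verification are correct as written. The genuine gap is in your treatment of (d) and (f). Your core computation $[t,\theta_m]=-t^m\partial_t^{[m-1]}=-t\,\theta_{m-1}$ is correct (both sides send $t^n$ to $-\binom{n}{m-1}t^{n+1}$), but the assertion that the printed right-hand sides are then obtained by unspecified ``algebraic rewrites'' is not a proof, and the rewrite cannot in fact be carried out as stated: for (f) with $m=2$ one has $[t,\theta_2](t^n)=-\binom{n}{1}t^{n+1}=-n\,t^{n+1}$, whereas $-2t-t(\theta_0+\theta_1)$ sends $t^n$ to $-(n+3)t^{n+1}$ (to $-(n+2)t^{n+1}$ if one sets $\theta_0=0$), and (d) fails similarly for $p>2$ (take $i=1$, $n=0$). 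Either the printed formulas use a convention you have not identified or they are misstated; in either case you must engage with this rather than defer to a rewrite. What your computation does correctly establish --- and what is all that (g), (h) and the later lemmas of the paper actually require --- is the identity $[t,\theta_m]=-t\,\theta_{m-1}$ together with its iterate $t\theta_m=\sum_{k=0}^{m}(-1)^k\theta_{m-k}\,t$; you should state and use these forms explicitly.

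Two smaller points. In (g), the reordering needed to pair $(t^{p^j})^{a_j}$ with $(\partial_t^{[p^j]})^{a_j}$ is not supplied by (d): the relevant facts are that $t^{p^j}$ and $\partial_t^{[p^j]}$ commute with $t^{p^k}$ and $\partial_t^{[p^k]}$ for $k>j$ (a Lucas computation: adding or subtracting $p^k$ does not disturb the $j$-th base-$p$ digit), while for $k<j$ they genuinely fail to commute since $[\partial_t^{[p^j]},t^{p^k}]=\partial_t^{[p^j-p^k]}\neq 0$; so the pairing must proceed from the lowest digit outward, and each inner block $(t^{p^j})^{a_j}(\partial_t^{[p^j]})^{a_j}=\theta_{p^j}(\theta_{p^j}-1)\cdots(\theta_{p^j}-a_j+1)$ then commutes past the remaining outer factors. (Alternatively, (g) follows at once from simultaneous diagonalizability: $\theta_m$ acts on $t^n$ by $\prod_j\binom{n_j}{a_j}$, each factor being a polynomial function of the eigenvalue $n_j$ of $\theta_{p^j}$.) Granting the corrected commutator formulas and (g), your argument for (h) is fine.
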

\begin{proof}
(a) and (b) are proven in \cite[Lemma 4.1]{mustatabernsteinsatopolynomialspositivechar}, and (c), (d), (e), (f) follow from (a) and loc.\ cit. (g) follows from \cite[Remark 6.3]{mustatabernsteinsatopolynomialspositivechar} and (a). For (h) we argue along the lines of \cite[Lemma 6.4]{mustatabernsteinsatopolynomialspositivechar}. The inclusion from right to left follows from (d) and (g). The other inclusion follows similarly using (f).
\end{proof}

\section{From Cartier modules to right $\mathcal{D}$-modules}
\label{SectionPreliminaries}

In this section we recall the construction of the functor from Cartier modules to (right) $\mathcal{D}_R$-modules.

Throughout we assume that $R$ is an $F$-finite regular ring and that all modules considered are finitely generated.
By definition, a Cartier module $M$ is an $R$-module $M$ together with an $R$-linear map $\kappa: F_\ast M \to M$. This is equivalent to the data of an $R$-module and an $R$-linear map $C: M \to F^! M = \Hom_R(F_\ast R, M)$ ($C$ is just the adjoint of $\kappa$ -- see \cite[Proposition 2.18]{blickleboecklecartierfiniteness}). Iterating this map we obtain a directed system $M \to F^!M \to \ldots \to {F^e}^! M$. We have

\begin{Prop}
\label{CartierToRightDMod}
Let $(M, \kappa)$ be a Cartier module. Then the limit $\mathcal{M}$ over the maps $M \to {F^e}^!M$ yields an isomorphism $\mathcal{M} \to F^! \mathcal{M}$ which endows $\mathcal{M}$ with a right $\mathcal{D}_R$-module structure.
\end{Prop}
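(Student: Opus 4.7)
The plan has three stages: first, to establish that the colimit $\mathcal{M}$ is a fixed point of the functor $F^!$; second, to transport the canonical action of $\End_R(F^e_\ast R)=\mathcal{D}^e_R$ on $\Hom_R(F^e_\ast R,\mathcal{M})$ back to $\mathcal{M}$; and third, to check compatibility across levels so that these actions assemble into a single right $\mathcal{D}_R$-module structure.

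For the first stage, I would use that since $R$ is $F$-finite and Noetherian (as it is regular), $F_\ast R$ is a finitely presented $R$-module, so the functor $F^!=\Hom_R(F_\ast R,-)$ commutes with filtered colimits. The directed system defining $\mathcal{M}$ has the form
\[
M\xrightarrow{C} F^!M\xrightarrow{F^!C}{F^2}^!M\xrightarrow{{F^2}^!C}\cdots,
\]
so applying $F^!$ term by term yields exactly the same system shifted by one step, whose colimit is again $\mathcal{M}$. This gives the desired isomorphism $\mathcal{M}\xrightarrow{\sim}F^!\mathcal{M}$; its inverse is the adjoint of the colimit of the Cartier structure maps, so it is induced by $\kappa$.

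For the second stage, iterating this isomorphism yields $\mathcal{M}\cong{F^e}^!\mathcal{M}=\Hom_R(F^e_\ast R,\mathcal{M})$ for every $e\geq 0$. The right-hand side carries a canonical right action of $\End_R(F^e_\ast R)=\mathcal{D}^e_R$ by precomposition, and this action transports to a right $\mathcal{D}^e_R$-structure on $\mathcal{M}$. For the third stage, I need to check that these actions are compatible with the inclusions $\mathcal{D}^e_R\hookrightarrow\mathcal{D}^{e+1}_R$ described in the excerpt (namely the composite $\End_R(F^e_\ast R)\to F_\ast\End_R(F^e_\ast R)\to\End_R(F^{e+1}_\ast R)$). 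The point is that the isomorphism $\mathcal{M}\cong{F^{e+1}}^!\mathcal{M}$ is obtained from $\mathcal{M}\cong{F^e}^!\mathcal{M}$ by one further application of $F^!$; tracing an element $P\in\End_R(F^e_\ast R)$ through the two descriptions of precomposition shows that they agree. Since $\mathcal{D}_R=\colim_e\mathcal{D}^e_R$, passing to the colimit then produces the desired right $\mathcal{D}_R$-module structure on $\mathcal{M}$.

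The main obstacle is the bookkeeping in the third step: one must verify that precomposition actions at different levels glue correctly through the Frobenius-pushforward maps on endomorphism rings. This is not conceptually deep but requires a careful diagram chase to identify, after transporting via the isomorphism with $\mathcal{M}$, the transition map between $\Hom_R(F^e_\ast R,\mathcal{M})$ and $\Hom_R(F^{e+1}_\ast R,\mathcal{M})$ with the action coming from $\mathcal{D}^e_R\subset\mathcal{D}^{e+1}_R$. Everything else is a formal consequence of commuting $\Hom_R(F_\ast R,-)$ with filtered colimits.
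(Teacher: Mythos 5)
Your proposal is correct and follows essentially the same route as the paper: identify $\mathcal{M}\cong{F^e}^!\mathcal{M}$ via the iterated adjoint structure maps, use the precomposition (right) action of $\End_R(F^e_\ast R)=\mathcal{D}^e_R$ on the Hom-module, and pass to the colimit over the inclusions $\mathcal{D}^e_R\subseteq\mathcal{D}^{e+1}_R$. The paper's proof is terser (it asserts the isomorphism and the compatibility without the finite-presentation/filtered-colimit justification you supply), so your write-up is if anything more complete.
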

\begin{proof}
It is easy to see that the $C^e$ induce a map $\mathcal{M} \to {F^e}^! \mathcal{M}$ which is an isomorphism for all $e \geq 0$. Each ${F^e}^! M$ is naturally a right $\mathcal{D}^e_R$-module by premultiplication. This induces a right $\mathcal{D}_R$-module structure in the limit.
\end{proof}

It is well known that if $R$ is smooth over a perfect field $k$ then the top-dimensional differential forms $\omega_{R/k}$ are naturally equipped with a right $\mathcal{D}_R$-module structure and $\omega_{R/k}$ induces an equivalence between left and right $\mathcal{D}_R$-modules (see \cite[Chapitre 1]{berthelotdmodulesarithmetiqueii}).

If $k$ is only $F$-finite but not perfect then the situation is more complicated. We proceed as follows. Fix, once and for all, an isomorphism $k \to F^! k$. If $R$ is regular essentially of finite type over $k$ with structural morphism $f: \Spec R \to k$ then we set $\omega_R := f^! k$ and we get an induced isomorphism $\omega_R \to F^! \omega_R$ (note that $F$ is the \emph{absolute} Frobenius morphism)\footnote{We are supressing a shift here, but $f^! k$ is supported in a single degree and is an invertible sheaf.}. This isomorphism endows $\omega_R$ with a right $\mathcal{D}_R$-module structure and after this choice one has an equivalence between right and left $\mathcal{D}_R$-modules that is obtained by tensoring with $\omega_R^{-1}$.

In particular, since direct limits commute with tensor products the category of $\mathcal{D}_R$-modules obtained in Proposition \ref{CartierToRightDMod} (together with the fixed isomorphism) is equivalent to the category of unit $R[F]$-modules of Emerton and Kisin (see \cite[Theorem 2.27]{blicklegammasheaves}). Moreover, if we restrict the functor $M \mapsto \colim_e {F^e}^! M$ to the category of minimal Cartier modules (or equivalently, if we descend it to Cartier crystals -- see \cite{blickleboecklecartierfiniteness} and \cite{blickleboecklecartierekequivalence} for these notions) then it is also fully faithful.

Since test modules are naturally attached to Cartier modules it seems more natural to work with \emph{right}-$\mathcal{D}_R$ modules when studying test module filtrations and Bernstein-Sato polynomials. In fact, using this approach we can employ the ordinary higher Euler operators (i.\,e.\ $t^{p^i} \partial_t^{[p^i]}$ instead of $\partial_t^{[p^i]} t^{p^i}$) and avoid a sign change in the definition of Bernstein-Sato polynomials compared to \cite{mustatabernsteinsatopolynomialspositivechar}.

Note that the case Musta\c t\u a considered in \cite{mustatabernsteinsatopolynomialspositivechar} corresponds in our setting to the Cartier module $\omega_R$ with the (chosen) isomorphism $\omega_R \to F^!\omega_R$. In terms of constructible sheaves on the \'etale site this corresponds, via the Riemann-Hilbert correspondence of \cite{emertonkisinrhunitfcrys} and \cite[Theorem 5.15]{blickleboecklecartierfiniteness}, to the case of the constant sheaf.

For left $\mathcal{D}^e_R$-modules one has, as a special case of Morita equivalence, an equivalence with $R$-Mod for all $e \geq 0$ (see e.\,g.\ \cite[Proposition 3.8 and Corollary 3.10]{blicklediss}). A similar statement holds for right modules:

\begin{Prop}
\label{EquivalenceModRModD}
Let $R$ be an $F$-finite regular ring. Then the functor ${F^e}^!(-) = \Hom_R(F^e_\ast R, -)$ induces an equivalence between Mod-$R$ and Mod-$\mathcal{D}^e_R$. Its inverse is given by $- \otimes_{\End_R(F_\ast^e R)} F_\ast^e R$. In particular, ${F^e}^!$ reflects isomorphisms.
\end{Prop}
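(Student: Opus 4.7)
The plan is to recognize the assertion as an instance of the classical Morita equivalence applied to the $R$-module $P := F^e_\ast R$, whose endomorphism ring is by construction $\mathcal{D}^e_R$. For this I need to verify that $P$ is a progenerator of $\text{Mod-}R$, i.\,e.\ a finitely generated projective generator.

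First I would establish projectivity. The $F$-finiteness hypothesis provides that $P$ is finitely generated over $R$, and Kunz's theorem asserts that regularity of the Noetherian ring $R$ is equivalent to flatness of the Frobenius; in particular $P$ is finitely generated flat, hence projective over the Noetherian ring $R$.

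Next I would check that $P$ is a generator of $\text{Mod-}R$. A finitely generated projective module over a commutative ring is a generator if and only if it is faithful, and $P$ is faithful because if $r \in R$ annihilates $P$ then in particular $r \cdot 1_{F^e_\ast R} = r^{p^e} \cdot 1 = 0$ in $R$, which forces $r = 0$ since regular rings are reduced. (One could equivalently verify the generator property locally, since Kunz gives that $F^e_\ast R$ is locally free, so $R$ is locally a direct summand of $P$.)

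With these two ingredients in hand the classical Morita theorem yields the equivalence: the functor $\Hom_R(P, -)$ is an equivalence between $\text{Mod-}R$ and the category of right modules over $\End_R(P) = \mathcal{D}^e_R$, where the right action is by precomposition, with quasi-inverse $- \otimes_{\mathcal{D}^e_R} P$. The final assertion that ${F^e}^!$ reflects isomorphisms is then automatic since any equivalence of categories reflects isomorphisms. The main obstacle is already settled by Kunz's theorem, which reduces the statement to a formal application of Morita theory; beyond this, the only care needed is to match the left/right conventions so that the precomposition action on $\Hom_R(F^e_\ast R, -)$ genuinely produces the right $\mathcal{D}^e_R$-module structure.
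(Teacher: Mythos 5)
Your proposal is correct and follows essentially the same route as the paper: both recognize the statement as Morita equivalence for the progenerator $F^e_\ast R$, using $F$-finiteness for finite generation and Kunz's theorem for local freeness, with the reflection of isomorphisms following formally from full faithfulness. Your verification that $F^e_\ast R$ is a generator (via faithfulness of a finitely generated projective module over a commutative ring) is a detail the paper leaves implicit, but it is accurate.
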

\begin{proof}
By the assumptions on $R$ we obtain that $F_\ast^e R$ is a finitely generated locally free $R$-module. Note that ${F^e}^! M = \Hom(F_\ast^e R, M)$ viewed as an $R$-module via the ring isomorphism $R \to F_\ast^e R$ and acting on the first factor is isomorphic to ${F^e}^\ast M \otimes_{F^e_\ast R} {F^e}^! R \cong M \otimes_R {F^e}^! R$ by \cite[Lemma 2.5]{blickleboecklecartierfiniteness}. Since ${F^e}^! R = \Hom_R(F^e_\ast R, R)$ the claimed equivalence is just a case of Morita equivalence (cf.\ e.\,g.\ \cite[Theorem 18.24]{lamlecturesonmodules}).

As ${F^e}^!$ induces an equivalence it is fully faithful and hence reflects isomorphisms.
\end{proof}

\section{Bernstein-Sato polynomials}\label{sec.BernSato}

In this section we introduce our notion of Bernstein-Sato polynomial after transfering some results of Musta\c{t}\u{a} in \cite{mustatabernsteinsatopolynomialspositivechar} to our right $\mathcal{D}$-module situation.

If $k$ is perfect and $\Spec R$ is smooth most of the results in this section follow formally once one observes that given a right $\mathcal{D}_R[t]$-module the operator $t^{p^e} \partial_t^{[p^e]}$ acts via $-\partial_t^{[p^e]} t^{p^e}$ on the left module obtained by tensoring with $\omega^{-1}_{R[t]/k}$ (\cite[1.3.4]{berthelotdmodulesarithmetiqueii}).

\begin{Le}
\label{EigenspaceDecompExistence}
Let $R$ be regular and $F$-finite and let $R[t]$ be the polynomial ring in one variable over $R$. Given a right $\mathcal{D}^e_{R[t]}$-module $M$, there is a unique decomposition as $\mathcal{D}^e_R$-modules
\[M = \bigoplus_{i \in \mathbb{F}_p^e} M_{i},\] where for $1 \leq l \leq e$ the operator $\theta_{p^{l-1}}$ acts on $M_{i}$ via $i_l$. This decomposition is preserved by $\mathcal{D}^e_{R[t]}$-morphisms. The same statement holds for $\mathcal{D}^e_R[\theta_1, \ldots, \theta_{p^{e-1}}]$-modules.
\end{Le}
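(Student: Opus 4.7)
The plan is to exhibit, for each $1 \le l \le e$, the identity $\theta_{p^{l-1}}^p = \theta_{p^{l-1}}$ as an equation in $\mathcal{D}^e_{R[t]}$; once this is established, the decomposition follows by combining, via the Chinese Remainder Theorem applied to $X^p - X \in \mathbb{F}_p[X]$, the simultaneous spectral decompositions of the pairwise commuting operators $\theta_1, \theta_p, \ldots, \theta_{p^{e-1}}$. The main technical step, and the only non-formal part, is verifying this polynomial identity.

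To establish it, observe that $\mathcal{D}^e_{R[t]} \cong \End_{R^{p^e}[t^{p^e}]}(R[t])$ acts faithfully on $R[t]$, so it suffices to compare the two endomorphisms on every monomial $t^j$ with $j \ge 0$. Since $\theta_{p^{l-1}}$ acts on $t^j$ by multiplication by $\binom{j}{p^{l-1}}$, Lucas's theorem gives $\binom{j}{p^{l-1}} \equiv a_{l-1} \pmod{p}$, where $a_{l-1}$ is the $(l-1)$-th $p$-adic digit of $j$; in particular the eigenvalue lies in $\mathbb{F}_p$. Because $a_{l-1}^p = a_{l-1}$ in characteristic $p$, the two operators $\theta_{p^{l-1}}^p$ and $\theta_{p^{l-1}}$ agree on each $t^j$, hence on all of $R[t]$, and so coincide in $\mathcal{D}^e_{R[t]}$.

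Given the identity, on any right $\mathcal{D}^e_{R[t]}$-module $M$ the action of $\theta_{p^{l-1}}$ factors through the semisimple $\mathbb{F}_p$-algebra $\mathbb{F}_p[X]/(X^p - X) \cong \prod_{j \in \mathbb{F}_p} \mathbb{F}_p$, whose orthogonal idempotents produce a canonical decomposition $M = \bigoplus_{j \in \mathbb{F}_p} \ker(\theta_{p^{l-1}} - j)$. The operators $\theta_{p^{l-1}}$ commute pairwise by Lemma~\ref{DiffOpFormulae}(e), so the spectral decompositions for $l = 1, \ldots, e$ are mutually compatible and refine to
\[
M \;=\; \bigoplus_{i \in \mathbb{F}_p^e} M_i, \qquad M_i = \bigcap_{l=1}^{e} \ker\bigl(\theta_{p^{l-1}} - i_l\bigr),
\]
whose uniqueness is immediate from the intrinsic characterization of the $M_i$.

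It remains to check compatibility. Under the embedding $\mathcal{D}^e_R \hookrightarrow \mathcal{D}^e_{R[t]}$ sending $P$ to the operator $bt^i \mapsto P(b)t^i$, its image preserves $t$-degrees, while $\theta_{p^{l-1}}$ is diagonal in the $t$-degree; the two therefore commute in $\mathcal{D}^e_{R[t]}$, and each $M_i$ is a $\mathcal{D}^e_R$-submodule. Any $\mathcal{D}^e_{R[t]}$-linear morphism commutes with $\theta_{p^{l-1}} \in \mathcal{D}^e_{R[t]}$ and hence preserves each $\ker(\theta_{p^{l-1}} - i_l)$. The argument for right $\mathcal{D}^e_R[\theta_1, \ldots, \theta_{p^{e-1}}]$-modules is the same, since the identity $\theta_{p^{l-1}}^p = \theta_{p^{l-1}}$ already lives in this subring of $\mathcal{D}^e_{R[t]}$.
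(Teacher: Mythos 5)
Your proof is correct and follows essentially the same route as the paper's: both arguments hinge on the identity $\theta_{p^{l-1}}^p = \theta_{p^{l-1}}$ (equivalently $\prod_{j=0}^{p-1}(\theta_{p^{l-1}}+j)=0$) for the commuting Euler operators, and then invoke the standard simultaneous spectral decomposition together with the degree-preservation argument for $\mathcal{D}^e_R$-linearity. The only difference is how that identity is verified: the paper deduces it from Lemma~\ref{DiffOpFormulae}~(b),(c) via $(\partial_t^{[p^{l-1}]})^p=0$, whereas you check it directly on monomials using the tautologically faithful action of $\mathcal{D}^e_{R[t]}$ on $R[t]$ and Lucas's theorem; both verifications are valid.
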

\begin{proof}
This works similarly to \cite[Proposition 4.2]{mustatabernsteinsatopolynomialspositivechar}. More precisely, one has \[\prod_{j=0}^{p-1} (\theta_{p^e} + j) = \prod_{j=1}^{p} (\theta_{p^e} + j) = 0\] for all $e \geq 0$. Indeed, by Lemma \ref{DiffOpFormulae} it suffices to show that $(\partial_t^{[p^e]})^p  = 0$. This in turn follows from (b) since $\frac{p^{e+1}!}{(p^e!)^p} $ is divisible by $p$. Using this and the fact that $[\theta_i, \theta_j] = 0$ the existence of such a decomposition follows. The remaining statements follow easily.
\end{proof}

Given $M$ we refer to the decomposition of Lemma \ref{EigenspaceDecompExistence} as the \emph{eigenspace decomposition of $M$ (with respect to the Euler operators)}.

Note that if $M$ is a right $\mathcal{D}^e_{R[t]}$-module then it is in particular a right $\mathcal{D}^{e-1}_{R[t]}$-module. Hence, $M$ admits eigenspace decompositions with respect to $\mathcal{D}^e_{R[t]}$ and with respect to $\mathcal{D}^{e-1}_{R[t]}$ and these are compatible. That is, if $M_{(i_1, \ldots, i_{e-1})}$ is an eigenspace for the $\theta_{p^l-1}$ with $1 \leq l \leq e -1$ then
\[ M_{(i_1, \ldots, i_{e-1})} = \bigoplus_{j \in \mathbb{F}_p} M_{(i_1, \ldots, i_{e-1}, j)} \] is an eigenspace decomposition with respect to the $\theta_{p^l-1}$ for $1 \leq l \leq e$. Again a similar statement holds for right $\mathcal{D}^e_R[\theta_1, \ldots, \theta_{p^{e-1}}]$-modules.

Given a morphism $f: \Spec S \to \Spec R$ of regular schemes and a right $\mathcal{D}_S$ module $M$ one defines the pushforward $f_+ M$ as $f_\ast (M \otimes_{\mathcal{D}_S} S) \otimes_R \mathcal{D}_R$. We thus have a natural map $f_\ast M \to f_+ M$. By abuse of notation we will denote the image of $f_\ast M$ under this map again by $f_\ast M$. Similarly, we define the pushforward $f_+ M$ for a right $\mathcal{D}^e_S$-module as $f_\ast(M \otimes_{\mathcal{D}^e_S} S) \otimes_R \mathcal{D}^e_R$.

Our next goal is to describe this natural map in the setting where we identify $\mathcal{D}^e_R$ with $\End_R(F_\ast^e R)$. We first need a

\begin{Le}
\label{PushforwardIsoLemma}
Let $R$ be regular and $F$-finite and $M$ a (right) $R$-module. Then $F_\ast^e M \otimes_{F_\ast^e R} \End_R(F_\ast^e R) \to \Hom_R(F_\ast^e R, M), m \otimes \varphi \mapsto [ r \mapsto m \varphi(r)]$ is an isomorphism of $\End_R(F_\ast^e R)$-modules.
\end{Le}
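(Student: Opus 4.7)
My plan is to deduce this from two standard tensor--hom identities for finitely generated projective modules, applied with $P = F_\ast^e R$. I would first note that $P$ is a finitely generated projective $R$-module: finite generation is $F$-finiteness of $R$, and projectivity (indeed local freeness) is Kunz's theorem for the regular ring $R$.

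For any finitely generated projective $R$-module $P$ and any $R$-module $N$, there are natural evaluation isomorphisms
\[ P \otimes_R \Hom_R(P, R) \xrightarrow{\sim} \End_R(P), \quad s \otimes \phi \mapsto \bigl[ r \mapsto s \phi(r) \bigr], \]
and
\[ N \otimes_R \Hom_R(P, R) \xrightarrow{\sim} \Hom_R(P, N), \quad n \otimes \phi \mapsto \bigl[ r \mapsto n \phi(r) \bigr]. \]
Both are verified by hand for $P = R$, hence for $P = R^n$, and hence for any direct summand of a finitely generated free module. Substituting the first iso into the source of the map in the lemma and contracting $F_\ast^e M \otimes_{F_\ast^e R} F_\ast^e R = F_\ast^e M$ yields
\[ F_\ast^e M \otimes_{F_\ast^e R} \End_R(F_\ast^e R) \;\cong\; F_\ast^e M \otimes_R \Hom_R(F_\ast^e R, R), \]
and the second iso then identifies this with $\Hom_R(F_\ast^e R, F_\ast^e M)$. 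Under the convention recalled immediately before Proposition~\ref{EquivalenceModRModD}---namely that $\Hom_R(F_\ast^e R, -)$ is viewed as an $R$-module through the ring iso $R \cong F_\ast^e R$ acting on the source---this is the same as $\Hom_R(F_\ast^e R, M)$. A direct chase of a pure tensor $m \otimes \varphi$ through the composition produces $r \mapsto m \varphi(r)$, matching the prescription of the lemma.

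The one delicate point is the action bookkeeping. The tensor $\otimes_{F_\ast^e R}$ uses the left $F_\ast^e R$-action on $\End_R(F_\ast^e R)$ by post-composition, so I would need to verify that under the iso $\End_R(P) \cong P \otimes_R \Hom_R(P,R)$ this action corresponds to multiplication in the $P$-factor (which it does). Symmetrically, the right $\End_R(F_\ast^e R)$-action---by multiplication on the $\End$-factor at the source and by pre-composition on Hom at the target---must be traced through the $\Hom_R(F_\ast^e R, R)$-factor; once this is recorded, $\End_R(F_\ast^e R)$-linearity of the composite is immediate. This is the principal obstacle; the remaining manipulations are routine cancellations.
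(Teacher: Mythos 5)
Your overall route is the same as the paper's: both arguments reduce to the case where $F_\ast^e R$ is finite locally free over $R$ (via $F$-finiteness and Kunz) and then perform the dual-basis computation --- the paper does it directly by exhibiting the two-sided inverse $\varphi \mapsto \sum_i \varphi(b_i)\otimes\delta_i$ in the free case, while you factor the map through the standard identities $\End_R(P)\cong P\otimes_R \Hom_R(P,R)$ and $N\otimes_R\Hom_R(P,R)\cong\Hom_R(P,N)$. So there is no real methodological difference; the only question is whether your composite is the stated map onto the stated target.

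That is precisely where your argument stops short. The ``action bookkeeping'' you defer is not a routine afterthought: it is the entire content of the lemma, because every object here carries two competing $R$-structures (the ring structure of $F_\ast^e R$ versus its Frobenius-twisted $R$-module structure), and the two tensor--hom identities are sensitive to which one occupies each slot. Your own chain, as you set it up, terminates at $\Hom_R(F_\ast^e R, F_\ast^e M)$, and the final step --- identifying this with $\Hom_R(F_\ast^e R, M)$ ``under the convention'' from before Proposition \ref{EquivalenceModRModD} --- is not valid. That convention only prescribes how $R$ acts \emph{on} the Hom-set through the source; it cannot change \emph{which} additive maps belong to the Hom-set, and that membership condition depends on the $R$-structure of the target: $\varphi(r^{p^e}s)=r^{p^e}\varphi(s)$ for $\Hom_R(F_\ast^e R, F_\ast^e M)$ versus $\varphi(r^{p^e}s)=r\varphi(s)$ for $\Hom_R(F_\ast^e R, M)$. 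These are genuinely different modules; already for $M=R$ the first is $\End_R(F_\ast^e R)=\mathcal{D}^e_R$ and the second is ${F^e}^!R$, of different local ranks over $R$. To close the gap you must trace the bimodule structures so that the middle $\otimes_R$ in the contraction is taken with the \emph{untwisted} structure, so that the second identity applies with $N=M$ and lands in $\Hom_R(F_\ast^e R, M)$ directly --- or simply verify the bijection by hand in the free case, as the paper does. As written, the concluding identification is a genuine gap, and it sits exactly at the one point you flagged but did not carry out.
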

\begin{proof}
The module structure on $F_\ast^e M \otimes_{F_\ast^e R} \End_R(F_\ast^e R)$ is given by multiplication from the right and the one on $\Hom_R(F_\ast^e R,M)$ is given by premultiplication.
Bijectivity is local so that we may assume that $F_\ast^e R$ is a free $R$-module (use \cite[Theorem 2.1]{kunzpositive}). Fix a basis $b_1, \ldots, b_n$ and let $\delta_i: F_\ast^e R \to F_\ast^e R$ be the projection onto the $i$th basis vector. We define a map $\Hom_R(F_\ast^e R, M) \to F_\ast^e M \otimes_{F_\ast^e R} \End_R(F_\ast^e R), \varphi \mapsto \sum_{i} \varphi(b_i) \otimes \delta_i$ which is a two-sided inverse.
\end{proof}

\begin{Prop}
\label{DModPushforward}
Let $f: \Spec S \to \Spec R$ be a morphism of regular $F$-finite schemes and let  $M$ be an $S$-module. Let $\Hom_S(F_\ast^e S, M)$ be a right $\mathcal{D}_S^e$-module via the action on the first factor. Then $f_+ \Hom_S(F_\ast^e S, M)$ is naturally isomorphic to ${F^e}^! f_\ast M$. Under this identification the natural map \[f_\ast \Hom_S(F_\ast^e S, M) \to f_+ \Hom_S(F_\ast^e S, M)\] is given by the composition of the canonical maps \[ f_\ast \Hom_S(F_\ast^e S, M) \to \Hom_{R}(f_\ast F_\ast^e S, f_\ast M) \to \Hom_{R}(F_\ast^e f_\ast S, f_\ast M)\] with the map \[\Hom_{R}(F_\ast^e f_\ast S, f_\ast M) \to \Hom_R(F^e_\ast R, f_\ast M), \quad\varphi \mapsto \varphi \circ F^e_\ast(f^\#).\]
\end{Prop}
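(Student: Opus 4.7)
\emph{Plan.} The plan is to unwind both sides of the asserted isomorphism by applying Lemma~\ref{PushforwardIsoLemma} twice: first over $S$ to absorb the right $\mathcal{D}^e_S$-module structure of $\Hom_S(F_\ast^e S, M)$ into a tensor product that simplifies under $-\otimes_{\mathcal{D}^e_S} S$, then over $R$ in the reverse direction to reassemble the resulting $R$-module as $\Hom_R(F_\ast^e R, f_\ast M) = {F^e}^! f_\ast M$.

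Concretely, by Lemma~\ref{PushforwardIsoLemma} (over $S$) one has $\Hom_S(F_\ast^e S, M) \cong F_\ast^e M \otimes_{F_\ast^e S} \mathcal{D}^e_S$ as right $\mathcal{D}^e_S$-modules. Tensoring on the right over $\mathcal{D}^e_S$ with $S$, viewed with its natural left $\mathcal{D}^e_S = \End_S(F_\ast^e S)$-module structure, collapses the $\mathcal{D}^e_S$-factor and yields
\[
  \Hom_S(F_\ast^e S, M) \otimes_{\mathcal{D}^e_S} S \;\cong\; F_\ast^e M
\]
as $S$-modules. Applying $f_\ast$ and using that the absolute Frobenius commutes with every morphism of schemes gives $f_\ast F_\ast^e M = F_\ast^e f_\ast M$. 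Finally, Lemma~\ref{PushforwardIsoLemma} applied over $R$ to the $R$-module $f_\ast M$ in the reverse direction produces
\[
  F_\ast^e f_\ast M \otimes_R \mathcal{D}^e_R \;\cong\; \Hom_R(F_\ast^e R, f_\ast M) \;=\; {F^e}^! f_\ast M,
\]
which together establish the isomorphism $f_+\Hom_S(F_\ast^e S, M) \cong {F^e}^! f_\ast M$.

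For the description of the natural map, I would trace an element $\varphi \in f_\ast \Hom_S(F_\ast^e S, M)$ through the chain above. By the definition of $f_+$, $\varphi$ is sent to $\varphi \otimes 1 \otimes 1 \in f_\ast(N \otimes_{\mathcal{D}^e_S} S) \otimes_R \mathcal{D}^e_R$, where $N = \Hom_S(F_\ast^e S, M)$. The factorization stated in the proposition corresponds to writing $\varphi$ first as an $R$-linear map $f_\ast F_\ast^e S \to f_\ast M$ via the functoriality of $f_\ast$, then identifying $f_\ast F_\ast^e S = F_\ast^e f_\ast S$, and finally precomposing with $F_\ast^e(f^\#) \colon F_\ast^e R \to F_\ast^e f_\ast S$. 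The last step is precisely the manner in which the Morita identification $F_\ast^e P \otimes_R \mathcal{D}^e_R \cong \Hom_R(F_\ast^e R, P)$ uses the $R$-module structure on $P = f_\ast M$ inherited from $f^\#$.

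The main technical obstacle is the bookkeeping of the several module structures on $F_\ast^e S$ (as a left $\mathcal{D}^e_S$-module over itself, as an $(F_\ast^e S, S)$-bimodule, and as an $R$-module through $f^\#$) and the verification that the explicit Morita isomorphisms of Lemma~\ref{PushforwardIsoLemma} interact correctly with the evaluation-at-$1$ and precomposition maps used above. Once these compatibilities are confirmed, both the isomorphism and the description of the natural map follow by assembling the three steps and performing a routine diagram chase.
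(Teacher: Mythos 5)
Your proposal is correct and follows essentially the same route as the paper: both collapse $\Hom_S(F_\ast^e S,M)\otimes_{\mathcal{D}^e_S} S$ to $F_\ast^e M$ via the Morita identifications (the paper cites Proposition~\ref{EquivalenceModRModD} for this step, you apply Lemma~\ref{PushforwardIsoLemma} over $S$ first, which amounts to the same computation), then push forward and reassemble with Lemma~\ref{PushforwardIsoLemma} over $R$. The description of the natural map is likewise left as the same routine element chase in both arguments.
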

\begin{proof}
By definition the pushforward $f_+ \Hom_S(F_\ast^e S, M)$ is given as \[F_\ast^e f_\ast (\Hom_S(F_\ast^e S, M) \otimes_{\End_S(F_\ast^e S)} F_\ast^e S) \otimes_{F_\ast^e R} \End_{R}(F_\ast^e R).\] Note that the right $F_\ast^e R$-module structure on the term $F_\ast^e f_\ast(\ldots)$ is given by $(\psi \otimes s) \cdot r = \psi \otimes s f(r)$. By Proposition \ref{EquivalenceModRModD} the whole expression is isomorphic to $F_\ast^e f_\ast M \otimes_{F_\ast^e R} \End_R(F_\ast^e R)$. Using Lemma \ref{PushforwardIsoLemma} above we obtain that $F_\ast^e f_\ast M \otimes_{F_\ast^e R} \End_R(F_\ast^e R) \to \Hom_R(F_\ast^e R, f_\ast M)$ is an isomorphism. One readily checks that the natural map is given by the formula above.
\end{proof}

Next we show that this isomorphism is compatible with direct limits. We first need a general Lemma.

\begin{Le}
\label{LimitLemma}
Let $I$ be a directed system and let $M_i, N_i$ be $R_i$-modules such that $M_i, N_i$ and $R_i$ are filtered by $I$. Write $M = \colim_i M_i$, $N = \colim_i N_i$ and $R = \colim_i R_i$. Then one has an isomorphism $\colim_{i} (M_i \otimes_{R_i} N_i) \to M \otimes_R N$.
\end{Le}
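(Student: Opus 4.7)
The plan is to construct natural maps in both directions and show they are mutually inverse. The forward direction is the easy one: for each $i$, the compositions $M_i \hookrightarrow M$ and $N_i \hookrightarrow N$ make $M \otimes_R N$ an $R_i$-bilinear target for $M_i \times N_i$, since the $R_i$-module structures on $M_i$ and $N_i$ are compatible with the $R$-module structures on $M$ and $N$ via the canonical ring map $R_i \to R$. This yields maps $\varphi_i \colon M_i \otimes_{R_i} N_i \to M \otimes_R N$ compatible with the transitions for $i \leq j$, and hence a map $\Phi \colon \colim_i (M_i \otimes_{R_i} N_i) \to M \otimes_R N$.

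For the inverse, I would define an $R$-bilinear map $\beta \colon M \times N \to \colim_i (M_i \otimes_{R_i} N_i)$ as follows. Given $m \in M$ and $n \in N$, use directedness of $I$ to choose a common index $k$ and lifts $m_k \in M_k$, $n_k \in N_k$. Set $\beta(m,n)$ to be the class of $m_k \otimes n_k$ in the colimit. Independence of the chosen lifts follows from the description of filtered colimits: two lifts $m_k, m_{k'}$ of $m$ become equal in some $M_\ell$ with $\ell \geq k, k'$, and similarly for $n$, and passing to level $\ell$ identifies the resulting classes in $M_\ell \otimes_{R_\ell} N_\ell$. Additivity is verified by simultaneously lifting pairs of summands to a common index.

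The substantive point is $R$-bilinearity of $\beta$. Given $r \in R$, lift it to some $r_k \in R_k$ at an index $k$ where $m$ and $n$ also admit lifts; then inside $M_k \otimes_{R_k} N_k$ one has $(r_k m_k) \otimes n_k = m_k \otimes (r_k n_k)$, whence $\beta(rm,n) = \beta(m,rn)$. By the universal property of $M \otimes_R N$, $\beta$ then descends to the desired map $\Psi \colon M \otimes_R N \to \colim_i (M_i \otimes_{R_i} N_i)$. Finally, $\Phi \circ \Psi = \id$ and $\Psi \circ \Phi = \id$ are immediate on elementary tensors.

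The main obstacle is purely bookkeeping: everything hinges on the fact that, because $I$ is directed, any finite collection of elements from the various $M_i, N_i, R_i$ can be realized at a common index, and any two choices of lift can be made to coincide further up. The verification is routine but must be carried out carefully at each step to ensure well-definedness; no deeper homological input is needed.
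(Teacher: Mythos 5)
Your proposal is correct and follows essentially the same route as the paper: build the forward map from the $R_i$-bilinear maps $M_i \times N_i \to M \otimes_R N$ via the universal properties, build the inverse from an $R$-bilinear map $M \times N \to \colim_i (M_i \otimes_{R_i} N_i)$, and check the composites on elementary tensors. You merely spell out the well-definedness and bilinearity checks (lifting to a common index) that the paper leaves implicit; the only cosmetic slip is writing $M_i \hookrightarrow M$, since the canonical maps to a filtered colimit need not be injective, though nothing in the argument uses injectivity.
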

\begin{proof}
We have $R_i$-bilinears map $M_i \times N_i \to M \otimes_R N$ which induce, by the universal properties of the limit and the tensor product, an $R$-linear map $\colim_i M_i \otimes_{R_i} N_i \to M \otimes N, [m_i \otimes n_i] \mapsto [m_i] \otimes [n_i]$.

On the other hand, the maps $M_i \times N_i \to \colim_i M_i \otimes_{R_i} N_i, (m_i, n_i) \mapsto [m_i \otimes n_i]$ induce an $R$-bilinear map $M \times N \to \colim_i M_i \otimes_{R_i} N_i$. This in turn induces an $R$-linear map $M \otimes N \to \colim_i M_i \otimes_{R_i} N_i$ which is an inverse to the map constructed above.
\end{proof}

\begin{Prop}
\label{DmodpushforwardlimitCartier}
Let $f: \Spec S \to \Spec R$ be a morphism of regular $F$-finite schemes and assume that $(M, \kappa)$ is a Cartier module on $\Spec S$ and denote its limit over the $C^e$ by $\mathcal{M}$. Then $f_+ \mathcal{M}$ is naturally isomorphic to $\colim {F^e}^! f_\ast M \cong \colim f_+ {F^e}^! M$.
\end{Prop}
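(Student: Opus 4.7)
The plan is to reduce the statement to a levelwise application of Proposition \ref{DModPushforward} and then transport the equality through the colimit using Lemma \ref{LimitLemma} together with the fact that $f_\ast$ is exact and commutes with filtered colimits in our affine setting.

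For the second isomorphism, Proposition \ref{DModPushforward} supplies, for each $e \geq 0$, a natural isomorphism $f_+ {F^e}^! M \cong {F^e}^! f_\ast M$. The first step is to check that these level-wise isomorphisms are compatible with the structure maps of the directed system: on the source side the maps come from the Cartier structure $C^e \colon M \to {F^e}^! M$, while on the target side the transition maps are induced by $f_\ast C^e$. This compatibility is a direct verification from the explicit description given in Proposition \ref{DModPushforward}. Passing to the colimit then gives $\colim_e f_+ {F^e}^! M \cong \colim_e {F^e}^! f_\ast M$.

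For the first isomorphism, unwind the definition
\[
f_+ \mathcal{M} = f_\ast\bigl(\mathcal{M} \otimes_{\mathcal{D}_S} S\bigr) \otimes_R \mathcal{D}_R,
\]
and use $\mathcal{M} = \colim_e {F^e}^! M$, $\mathcal{D}_S = \colim_e \mathcal{D}^e_S$, and $\mathcal{D}_R = \colim_e \mathcal{D}^e_R$ (taking $S$ as a constant system in Lemma \ref{LimitLemma}). A first application of Lemma \ref{LimitLemma} yields
\[
\mathcal{M} \otimes_{\mathcal{D}_S} S \;\cong\; \colim_e \bigl({F^e}^! M \otimes_{\mathcal{D}^e_S} S\bigr).
\]
Since $f$ is an affine morphism of affine schemes, $f_\ast$ commutes with filtered colimits, so this becomes
\[
f_\ast\bigl(\mathcal{M} \otimes_{\mathcal{D}_S} S\bigr) \;\cong\; \colim_e f_\ast\bigl({F^e}^! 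M \otimes_{\mathcal{D}^e_S} S\bigr).
\]
Applying Lemma \ref{LimitLemma} a second time to commute the remaining tensor product $\otimes_R \mathcal{D}_R$ with the colimit produces $f_+ \mathcal{M} \cong \colim_e f_+ {F^e}^! M$, as required.

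The step I expect to require the most care is not any single formal manipulation but the bookkeeping that the transition maps on both sides of the isomorphism from Proposition \ref{DModPushforward} agree: one must trace the map ${F^e}^! M \to {F^{e+1}}^! M$ coming from $C$ through the identifications of Lemma \ref{PushforwardIsoLemma} and Proposition \ref{DModPushforward} and recognize it as the map ${F^e}^! f_\ast M \to {F^{e+1}}^! f_\ast M$ induced by $f_\ast C$ composed with the transition maps built from $F^e_\ast(f^\#)$. Once this naturality check is done, no further nontrivial ingredient is needed.
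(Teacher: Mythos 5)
Your proposal is correct and follows essentially the same route as the paper: two applications of Lemma \ref{LimitLemma} to unwind the definition of $f_+\mathcal{M}$, plus the levelwise identification of Proposition \ref{DModPushforward} together with a check that it intertwines the transition maps coming from the Cartier structure. The compatibility check you flag as the delicate point is exactly what the paper records via its commutative diagram.
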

\begin{proof}
The first claimed isomorphism obtained by applying Lemma \ref{LimitLemma} twice.

For the second claimed isomorphism note that by Proposition \ref{DModPushforward} we have isomorphisms $f_+ {F^e}^! M \to {F^e}^! f_\ast M$. Moreover, we have a commutative diagram
\[ \begin{xy} \xymatrix{f_+ {F^e}^! M \ar[r] \ar[d] & \Hom_R(F_\ast^e R, f_\ast M) \ar[d]^{\varphi \mapsto [ r \mapsto \kappa(\varphi(r))]}\\  f_+ {F^{e+1}}^! M \ar[r]& \Hom_R(F_\ast^{e+1} R, f_\ast M),}\end{xy}\] where the left vertical map is given by tensoring $\varphi \mapsto [s \mapsto \kappa(\varphi(s))]$ with the natural maps $F^e_\ast S \to F^{e+1}_\ast S$ and $\mathcal{D}^e_S \to \mathcal{D}^{e+1}_S$. 
\end{proof}

\begin{Le}
\label{BSPQuotientDModule}
Let $R$ be regular and $F$-finite and $f \in R$.
Denote by $\gamma: \Spec R \to \Spec R[t]$ the graph embedding along $f$ and let $M$ be a right $\mathcal{D}_R$-module. Then the quotient \[N := (\gamma_\ast M) \mathcal{D}^e_R[\theta_1, \theta_p, \ldots, \theta_{p^{e -1}}] / (\gamma_\ast M) \mathcal{D}^e_R[\theta_1, \theta_p, \ldots, \theta_{p^{e -1}}] t\] is a right $\mathcal{D}^e_R[\theta_1, \theta_p, \ldots, \theta_{p^{e-1}}]$-module.
\end{Le}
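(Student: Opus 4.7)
The plan is to reduce the statement entirely to the commutation relation $At=tA$ of Lemma~\ref{DiffOpFormulae}(h), where I write $A := \mathcal{D}^e_R[\theta_1, \theta_p, \ldots, \theta_{p^{e-1}}]$. First I observe that $A$ embeds into $\mathcal{D}^e_{R[t]}$, using the inclusion $\mathcal{D}^e_R\hookrightarrow \mathcal{D}^e_{R[t]}$ recalled in Section~1 together with the fact that each $\theta_{p^i}$ is a differential operator on $R[t]$. Consequently $\gamma_+M$ carries a right $A$-action as well as right multiplication by $t\in R[t]\subseteq \mathcal{D}^e_{R[t]}$, and $(\gamma_\ast M)A$ is, by construction, the right $A$-submodule generated by the image of $\gamma_\ast M$. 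To prove that $N$ is a right $A$-module I need to verify (i) that $(\gamma_\ast M)A\cdot t\subseteq (\gamma_\ast M)A$, so that the displayed quotient makes literal sense, and (ii) that $(\gamma_\ast M)A\cdot t$ is preserved under right multiplication by $A$.

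Both claims follow directly from the equality $At=tA$. For (i), given a generator $mat$ with $m\in \gamma_\ast M$ and $a\in A$, I use $At\subseteq tA$ to rewrite $at=ta'$ with $a'\in A$; then $mat=(mt)a'$. Because $\gamma$ is the graph embedding and the induced ring map $R[t]\to R$ sends $t$ to $f$, the element $mt$ coincides with $fm\in \gamma_\ast M$, so $mat=(fm)a'\in (\gamma_\ast M)A$. For (ii), given $b\in A$, the inclusion $tA\subseteq At$ yields $tb=b't$ for some $b'\in A$, whence $(mat)\cdot b = m(ab')t\in (\gamma_\ast M)At$, as required. The quotient then inherits a right $A$-module structure in the evident way.

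I do not anticipate any substantial obstacle beyond Lemma~\ref{DiffOpFormulae}(h), which already encodes all of the nontrivial content; the present lemma is bookkeeping extracting the consequence that right multiplication by $t$ behaves well with respect to the subalgebra $A$. The one thing worth keeping straight is the asymmetry forced by the right-module convention: both directions of the equality $At=tA$ are used, once in step (i) to push $t$ past an element of $A$ to its right, and once in step (ii) to push $t$ past an element of $A$ to its left.
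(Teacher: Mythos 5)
Your proof is correct and follows exactly the paper's (one-line) argument: the commutation relation $\mathcal{D}^e_R[\theta_1,\ldots,\theta_{p^{e-1}}]t = t\,\mathcal{D}^e_R[\theta_1,\ldots,\theta_{p^{e-1}}]$ from Lemma \ref{DiffOpFormulae}(h) together with $(\gamma_\ast M)t = \gamma_\ast(Mf)$. You have simply spelled out the bookkeeping that the paper leaves implicit.
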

\begin{proof}
The claim follows from Lemma \ref{DiffOpFormulae} (i) and $(\gamma_\ast M) t = \gamma_\ast(Mf)$.
\end{proof}

\begin{Def}
\label{BSPDefinition}
With the notation of Lemma \ref{BSPQuotientDModule} let $\Gamma^e_f \subseteq \{0, \ldots, p-1\}^e$ be the set of those $i = (i_1, \ldots, i_e) \in \mathbb{F}_p^e$ for which the eigenspace $N_i$ of $N$ (as constructed in Lemma \ref{EigenspaceDecompExistence}) is non-trivial. Then we define the $e$th \emph{Bernstein-Sato polynomial} of $M$ as \[b^e_{M,f}(s) = \prod_{i \in \Gamma^e_f} (s - (\frac{i_e}{p} + \cdots + \frac{i_1}{p^e})) \in \mathbb{Q}[s], \] where we lift\footnote{Here and elsewhere we will always view $\mathbb{F}_p$ as $\{0, \ldots, p-1\}$ and confuse elements in $\mathbb{F}_p$ with a lift whenever this is convenient.} elements of $\mathbb{F}_p = \{0, \ldots, p-1\}$ to $\mathbb{Z}$.
\end{Def}

Note that since we are working with right modules rather then left modules we do not need to invert the sign of the eigenvalues as in \cite{mustatabernsteinsatopolynomialspositivechar}. More precisely, Musta\c{t}\u{a} considers the action of $\partial^{[p^l]} t^{p^l}$ on the left $\mathcal{D}_R^e[\vartheta_1, \ldots, \vartheta_{p^{e-1}}]$-module \[\mathcal{D}_R^e[\vartheta_1, \ldots, \vartheta_{p^{e-1}}]\gamma_\ast R/ t \mathcal{D}_R^e[\vartheta_1, \ldots, \vartheta_{p^{e-1}}]\gamma_\ast R\] and if $(i_1, \ldots, i_e)$ is an eigenvalue for the left action then $\frac{-i_e}{p} + \ldots + \frac{-i_1}{p^e}$ is encoded as a zero in a Bernstein-Sato polynomial. As pointed out at the beginning of Section \ref{sec.BernSato} these Bernstein-Sato polynomials coincide with the one defined in \ref{BSPDefinition} provided $R$ is smooth over a perfect field $k$. In fact, Theorem \ref{MainResult} and \cite[Theorem 6.7]{mustatabernsteinsatopolynomialspositivechar} show that the polynomials coincide for $R$ regular essentially of finite type over an $F$-finite field and $e \gg 0$.

\begin{Bem}
We comment on our definition of Bernstein-Sato polynomial and its relation to the definition over the complex numbers. Let $X = \mathbb{A}^{n+1}_\mathbb{C}$. Then $\mathcal{D}_X$ is just the Weyl algebra $\mathbb{C}[x_1, \ldots, x_{n+1}, \partial_1, \ldots, \partial_{n+1}]$ with the usual relation $[\partial_i, x_j] = \delta_{ij}$. Assume that the hypersurface is given by $t = x_{n+1}$. Then for a regular holonomic quasi-unipotent $\mathcal{D}_X$-module $M$ the $V$-filtration along $t$ is a decreasing $\mathbb{Q}$-indexed filtration with certain properties (see \cite{budurvfiltrationdmodules} for a definition).

In particular, one has (cf. \cite[Proposition 2.1.7]{sabbahvfiltrations}) $V^k M = V^k(\mathcal{D}_R) V^0(M)$ for $k \leq 0$ but $V^k M = V^{k-1}(\mathcal{D}_R) V^1(M)$ only for $k \geq 1$.\footnote{Note that following \cite{budurvfiltrationdmodules} we have inverted signs here compared to \cite{sabbahvfiltrations}.}
Here $V^k(\mathcal{D}_X)$ is the $V$-filtration on $\mathcal{D}_X$ which is given by \[V^k(\mathcal{D}_X) = t^k V^0(\mathcal{D}_X) \quad \text{ for }k \geq 1\] and by \[V^k(\mathcal{D}_X) = V^0(\mathcal{D}_X) \mathcal{D}_{X, -k} \quad \text{ for } k \leq -1,\] where $\mathcal{D}_{X, -k}$ are the differential operators of order $\leq -k$. Finally, $V^0(\mathcal{D}_X)$ is given by $\{ \sum_{n \geq m} f(x) \partial_x^\alpha t^n \partial_t^m \, \vert \, f \in \mathbb{C}[x_1, \ldots, x_n]\}$.

The Bernstein-Sato polynomial of $M$ is now defined as the monic minimal polynomial $b \in \mathbb{C}[s]$ such that $b(t \partial_t +k ) V^k M \subseteq V^{k+1} M$ for all $k \in \mathbb{Z}$. By the above it is actually sufficient to construct a Bernstein-Sato polynomial that satisfies the above identity for $k = -1, 0, 1$.

Since in characteristic $p > 0$ the Brian\c{c}on-Skoda theorem \cite[Theorem 4.21]{blicklep-etestideale} always yields $f^n \tau(M, f^{t - n}) = \tau(M, f^t)$ for $t \geq n$ our definition can be seen as an analogue of the one over the complex numbers since we only need to control the range $k = 0$.
\end{Bem}

\section{Test modules and $\mathcal{D}$-modules}
\label{SectionTestDmodules}
In this section we relate test modules of a Cartier module $M$ over a regular $F$-finite ring $R$ with certain right $\mathcal{D}^e_R$ submodules of ${F^e}^! M$. First, we need several technical lemmata concerning test modules.

\begin{Le}
\label{CartierIterationAscendingChain}
Let $R$ be essentially of finite type over an $F$-finite field, $(M, \kappa)$ an $F$-pure coherent Cartier module, $t \in \mathbb{Q}_{\geq 0}$ and let $f$ be an $M$-regular element. Then one has $\kappa^e f^{\lceil tp^e\rceil} f^l M \subseteq \kappa^{e+1} f^{\lceil t p^{e+1} \rceil} f^l M$ for all $l \geq 0$. In particular, for $e \gg 0$ depending on $l$ equality holds.
\end{Le}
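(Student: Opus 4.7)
The plan is to exploit the $F$-purity of $M$ to absorb one additional Cartier iteration into the left-hand side via the projection formula, and then to compare exponents of $f$. Setting $a_e := \lceil t p^e \rceil + l$, the inclusion to be proved takes the shape $\kappa^e(f^{a_e} M) \subseteq \kappa^{e+1}(f^{a_{e+1}} M)$, so the whole argument reduces to extracting one more $\kappa$ on the outside while controlling the resulting exponent of $f$.

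First I would record the projection formula in this setting. Recall that $R$-linearity of $\kappa \colon F_\ast M \to M$, combined with the identification $r \cdot_{F_\ast M} m = r^p m$, is precisely the identity $\kappa(r^p m) = r \kappa(m)$. By $F$-purity, $\kappa \colon F_\ast M \to M$ is surjective, so $\kappa(M) = M$, and hence $f^{a_e} M = f^{a_e}\kappa(M) = \kappa(f^{p a_e} M)$. Applying $\kappa^e$ to both sides gives $\kappa^e(f^{a_e} M) = \kappa^{e+1}(f^{p a_e} M)$.

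It then suffices to check $p a_e \geq a_{e+1}$, since this yields $f^{p a_e} M \subseteq f^{a_{e+1}} M$ and hence, after applying $\kappa^{e+1}$, the desired inclusion. The inequality is elementary: $p \lceil t p^e \rceil$ is an integer bounded below by $p \cdot t p^e = t p^{e+1}$, so $p \lceil t p^e \rceil \geq \lceil t p^{e+1} \rceil$, and combining with the trivial bound $p l \geq l$ gives $p a_e \geq a_{e+1}$. For the ``in particular'' statement, the inclusions assemble into an ascending chain of $R$-submodules of $M$; since $R$ is Noetherian and $M$ is finitely generated, the chain must stabilize, so equality holds for $e$ sufficiently large (with the bound depending on $l$ through the chain).

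The only place requiring care is the bookkeeping for the exponent of $f$ when pulling it past $\kappa$ --- one must remember that moving a scalar $f^a$ to the outside of $\kappa$ costs a factor of $p$ in the exponent --- but this is just the standard projection formula, and once it is in place the combinatorial estimate on $a_e$ and $a_{e+1}$ is immediate. I do not expect any real obstacle beyond this bookkeeping.
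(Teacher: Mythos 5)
Your proof is correct and follows essentially the same route as the paper's: both rest on the projection formula $\kappa(f^{pa}M)=f^{a}\kappa(M)$, surjectivity of $\kappa$, and the elementary inequality $p\lceil tp^e\rceil\geq\lceil tp^{e+1}\rceil$, followed by Noetherianity for the stabilization. The only cosmetic difference is that you absorb $f^l$ into a single exponent $a_e$ and use $pl\geq l$, whereas the paper keeps $f^l$ separate and first records the auxiliary inclusion $\kappa f^lM\supseteq f^lM$.
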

\begin{proof}
First of all, for any $l$ we have \[\kappa^n f^lM \supseteq \kappa^n f^{lp} M = \kappa^{n-1} f^{l} \kappa M = \kappa^{n-1} f^{l} M,\] where we used that $\kappa$ is surjective since $M$ is $F$-pure.

Next, we have \[\kappa f^{\lceil tp^{e+1} \rceil} f^lM \supseteq \kappa (f^{\lceil tp^e\rceil})^p f^l M = f^{\lceil tp^e\rceil} \kappa f^lM \supseteq f^{\lceil tp^e\rceil}f^l M,\] where we used the previous observation with $n =1$ for the last inclusion. Applying $\kappa^e$ on both sides yields the claimed inclusion. Since $M$ is coherent the ascending chain $\kappa^e f^{\lceil tp^e\rceil} f^l M$ stabilizes.
\end{proof}

\begin{Bem}
With the notation of Lemma \ref{CartierIterationAscendingChain} if $t = \frac{m}{p^s}$ and $n \geq s$ is such that $\kappa^n f^l M = M$ then equality holds for any $e \geq 2n$. Indeed,
\[ \kappa^e f^{mp^{e -s}} f^l M = \kappa^e (f^{mp^{n-s}})^{p^{e-n}} f^l M = \kappa^{n} f^{mp^{n-s}} \kappa^{e-n} f^l M = \kappa^n f^{mp^{n-s}} M\] which is independent of $e$.

If $R$ is a polynomial ring and $M$ is given explicitly by a presentation $R^a \to R^b \to M$ then we expect that it should be possible to determine $e$ explicitly for a given hypersurface $f$.
\end{Bem}

\begin{Le}
\label{FPureHypersurface}
Let $R$ be essentially of finite type over an $F$-finite field, let $(M, \kappa)$ be an $F$-regular coherent Cartier module, $t \in \mathbb{Z}[\frac{1}{p}]$ and let $f$ be an $M$-regular element. Consider the Cartier algebra $\mathcal{C}$ generated in degree $e \geq 1$ by $\kappa^e f^{\lceil t p^e\rceil}$. Then $\underline{M}_\mathcal{C} = \kappa^e f^{\lceil tp^e \rceil} M$ for all $e \gg 0$.
\end{Le}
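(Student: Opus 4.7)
The plan is to identify $\underline{M}_\mathcal{C}$ with the eventually constant value $N_\infty$ of the chain $N_e := \kappa^e f^{\lceil tp^e\rceil} M$, which is ascending by Lemma \ref{CartierIterationAscendingChain} and stabilizes by coherence of $M$. First I would write $t = m/p^s$ with $m,s \in \mathbb{Z}_{\geq 0}$; since $\lceil tp^e\rceil = mp^{e-s}$ for $e \geq s$, the basic $R$-linearity $\kappa^{e-s}(r^{p^{e-s}} x) = r\kappa^{e-s}(x)$ together with the $F$-purity identity $\kappa^{e-s} M = M$ gives
\[
\kappa^e f^{\lceil tp^e\rceil} M \; = \; \kappa^s \kappa^{e-s}\bigl((f^m)^{p^{e-s}}M\bigr) \; = \; \kappa^s\bigl(f^m \kappa^{e-s}M\bigr) \; = \; \kappa^s f^m M,
\]
so that $N_e$ is in fact constant from $e = s$ on, and $N_\infty = \kappa^s f^m M$.

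Next I would compute $\mathcal{C}_+ M$. The same $R$-linearity yields the composition formula
\[
(\kappa^{e_1}f^{\lceil tp^{e_1}\rceil}) \circ (\kappa^{e_2}f^{\lceil tp^{e_2}\rceil}) = \kappa^{e_1+e_2} f^{\lceil tp^{e_1}\rceil p^{e_2} + \lceil tp^{e_2}\rceil},
\]
and the elementary inequality $\lceil tp^{e_1}\rceil p^{e_2} + \lceil tp^{e_2}\rceil \geq \lceil tp^{e_1+e_2}\rceil$ shows that any such composition of total degree $e$ sends $M$ into $N_e$. Because these compositions generate $\mathcal{C}_e$ over $R$, one obtains $\mathcal{C}_e M \subseteq N_e$, and the obvious reverse inclusion gives $\mathcal{C}_e M = N_e$, whence $\mathcal{C}_+ M = \sum_{e\geq 1} N_e = N_\infty$.

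It remains to verify that $\mathcal{C}_+ N_\infty = N_\infty$; iterating this identity gives $(\mathcal{C}_+)^h M = N_\infty$ for every $h \geq 1$, so $\underline{M}_\mathcal{C} = N_\infty$ as desired. The inclusion $\subseteq$ is automatic from the previous step, and for $\supseteq$ the same manipulations as above yield, for $e \geq s$,
\[
\kappa^e f^{\lceil tp^e\rceil} N_\infty \; = \; \kappa^{e+s}\bigl(f^{mp^e + m} M\bigr) \; = \; \kappa^s\bigl(f^m \,\kappa^e(f^m M)\bigr).
\]
The main obstacle, and the only essential use of the $F$-regularity hypothesis, is now to show that $\kappa^e(f^m M) = M$ for $e$ sufficiently large. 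Since $f$ is $M$-regular it lies in no associated prime of $M$, so $f^m M$ and $M$ agree at every generic point of $\Supp M$; consequently the Cartier submodule generated by $f^m M$, which by Lemma \ref{CartierIterationAscendingChain} is the ascending union $\bigcup_n \kappa^n(f^m M)$, must equal $M$ by the definition of $F$-regularity. Coherence of $M$ then forces $\kappa^{n_m}(f^m M) = M$ for some finite $n_m$, and choosing $e \geq \max(s,n_m)$ yields $\kappa^s(f^m\kappa^e(f^m M)) = \kappa^s f^m M = N_\infty$, completing the proof.
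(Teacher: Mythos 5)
Your proof is correct and follows essentially the same route as the paper's: verify the composition formula for $\mathcal{C}$, identify $\mathcal{C}_+M$ with the stabilized chain $\kappa^e f^{\lceil tp^e\rceil}M$, and use $F$-regularity to show $\mathcal{C}_+$ fixes that stable value, so that $(\mathcal{C}_+)^hM$ is constant. Your two small refinements — observing that the chain is already constant from $e=s$ on by $F$-purity, and deriving $\kappa^e(f^mM)=M$ directly from the definition of $F$-regularity rather than citing it — are both valid and make the argument a bit more self-contained.
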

\begin{proof}
The calculation 
\[
\kappa^{a} f^{\lceil tp^a \rceil} \kappa^{a'} f^{\lceil tp^{a'} \rceil} = \kappa^{a+a'} f^{{\lceil tp^a \rceil}p^{a'}} f^{\lceil tp^{a'} \rceil}  = \kappa^{a + a'} f^{\lceil t p^{a+a'} \rceil} f^{r} 
\]
with $0 \leq r = \lceil tp^{a'} \rceil +  \lceil tp^a\rceil p^{a'} - \lceil t p^{a + a'}\rceil$ shows that $\mathcal{C}$ is in indeed a Cartier algebra. Applying Lemma \ref{CartierIterationAscendingChain} with $l=0$ shows that $\kappa^ef^{\lceil tp^e \rceil}M \subseteq \kappa^{e+1}f^{\lceil tp^{e+1} \rceil}M$ (with equality for $e \gg 0$) so that for all $e \gg 0$ we have $\kappa^ef^{\lceil tp^e \rceil}M = \mathcal{C}_+M$.

By \cite[Proposition 2.13 and Corollary 2.14]{blicklep-etestideale} one has $\underline{M}_{\mathcal{C}} = (\mathcal{C}_+)^h M$ for all $h \gg 0$. Fix such an $h$ and $e$ as above. Then the inclusion from left to right follows by the same argument as above.

For the other inclusion we have to prove that $\mathcal{C}_+ \kappa^e f^{\lceil tp^e \rceil} M \supseteq \kappa^e f^{\lceil tp^e \rceil} M$ for $e \gg 0$. We use the assumption on $t$ and write $t = \frac{m}{p^s}$ with $m \in \mathbb{Z}$ and assume that $e \geq s$. We consider elements of the form $\kappa^{e'} f^{\lceil tp^{e'}\rceil} \in \mathcal{C}_+$. For $e' \geq e$ we compute \[ \kappa^{e'} f^{mp^{e'-s}} \kappa^e f^{mp^{e-s}} M = \kappa^{e} f^{mp^{e-s}} \kappa^{e'-e} \kappa^e f^{mp^{e-s} } M = \kappa^{e} f^{mp^{e-s}}M,\] where for the last equality we used the $F$-regularity of $(M, \kappa)$ (see \cite[Proposition 5.2]{staeblertestmodulnvilftrierung} and Lemma \ref{CartierIterationAscendingChain}) possibly choosing a larger $e'$.
\end{proof}

Next we prove a variant of \cite[Lemma 2.1]{blicklemustatasmithdiscretenesshypersurfaces} for modules.

\begin{Le}
\label{TestModuleComputationHypersurface}
Let $R$ be essentially of finite type over an $F$-finite field. Let $(M, \kappa)$ be an $F$-regular coherent Cartier module, $f \in R$ a non zero-divisor on $M$ and $t \in \mathbb{Z}[\frac{1}{p}]$. Then for all $e \gg 0$ we have $\tau(M, f^t) = \kappa^e (f^{t p^{e}}M)$.
\end{Le}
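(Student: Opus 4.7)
The plan is to show that both $\tau(M,f^t)$ and the stable value of $\kappa^e(f^{tp^e}M)$ coincide with $\underline{M}_\mathcal{C}$, where $\mathcal{C}$ is the Cartier algebra generated by the $\kappa^n f^{\lceil tp^n\rceil}$. Writing $t=m/p^s$, for $e\geq s$ we have $\lceil tp^e\rceil = tp^e = mp^{e-s}$, so by Lemma \ref{CartierIterationAscendingChain} (with $l=0$) the chain $\kappa^e(f^{tp^e}M)$ is ascending and, by Lemma \ref{FPureHypersurface}, stabilizes to $\underline{M}_\mathcal{C}$. Hence it suffices to prove $\tau(M,f^t)=\underline{M}_\mathcal{C}$.

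My first step is to rewrite each term of the defining sum $\tau(M,f^t)=\sum_{e\geq 1}\kappa^e f^{\lceil tp^e\rceil} f\underline{M}_\mathcal{C}$ in a uniform form. Fix $e_0$ large enough that $\underline{M}_\mathcal{C}=\kappa^{e_0}(f^{tp^{e_0}}M)$ and $\lceil tp^e\rceil=tp^e$ for all $e\geq e_0$. Using the identity $\kappa(r^p x)=r\kappa(x)$ to commute powers of $f$ past Cartier operators, for $e\geq e_0$ the $e$-th term becomes $\kappa^e f^{tp^e} f\kappa^{e_0}(f^{tp^{e_0}}M)=\kappa^{e+e_0}(f^{tp^{e+e_0}+(t+1)p^{e_0}}M)$. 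By Lemma \ref{CartierIterationAscendingChain} with $l=(t+1)p^{e_0}$, this is ascending in $e$ and stabilizes, so $\tau(M,f^t)$ equals this common stable value.

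The inclusion $\tau(M,f^t)\subseteq\underline{M}_\mathcal{C}$ is then immediate, since each term is contained in $\kappa^{e+e_0}(f^{tp^{e+e_0}}M)$, which equals $\underline{M}_\mathcal{C}$ for $e$ large. For the reverse inclusion I would invoke the $F$-regularity of $(M,\kappa)$ combined with the fact that $f^{(t+1)p^{e_0}}$ is a non-zero-divisor: the ascending Cartier chain $\kappa^N(f^{(t+1)p^{e_0}}M)$ generically agrees with $M$ at every generic point of $\Supp M$, so its stable value is a Cartier submodule of $M$ forced by $F$-regularity to equal $M$ itself (this is essentially the argument used already in the proof of Lemma \ref{FPureHypersurface}, citing \cite[Proposition 5.2]{staeblertestmodulnvilftrierung}). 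Substituting $M=\kappa^N(f^{(t+1)p^{e_0}}M)$ into $\underline{M}_\mathcal{C}=\kappa^{e_1}(f^{tp^{e_1}}M)$ (for $e_1$ large) and commuting $f$'s through as before gives $\underline{M}_\mathcal{C}=\kappa^{e_1+N}(f^{tp^{e_1+N}+(t+1)p^{e_0}}M)$, which is one of the terms of our sum and hence is contained in $\tau(M,f^t)$.

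I expect the main obstacle to be precisely the asymmetry introduced by the extra factor of $f$ appearing inside the defining sum for $\tau(M,f^t)$: matching this to the cleaner formula $\kappa^e(f^{tp^e}M)$ is exactly where one must bring in the $F$-regularity hypothesis, via the principle that a non-zero-divisor on an $F$-regular Cartier module can be absorbed after sufficiently many applications of $\kappa$. Everything else is bookkeeping with the Frobenius compatibility $\kappa(r^p x)=r\kappa(x)$ and the stabilization statements of Lemmata \ref{CartierIterationAscendingChain} and \ref{FPureHypersurface}.
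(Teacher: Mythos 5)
Your proposal is correct and follows essentially the same route as the paper: identify $\underline{M}_\mathcal{C}$ with $\kappa^{e_0}(f^{tp^{e_0}}M)$ via Lemma \ref{FPureHypersurface}, rewrite each term of the defining sum as $\kappa^{e+e_0}(f^{tp^{e+e_0}+(t+1)p^{e_0}}M)$ using the projection formula, stabilize via Lemma \ref{CartierIterationAscendingChain}, and absorb the extra factor $f^{(t+1)p^{e_0}}$ using $F$-regularity of $(M,\kappa)$ together with $f$ being a non-zero-divisor. The paper compresses your two inclusions into a single chain of equalities, but the key absorption step is identical.
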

\begin{proof}
First of all, note that $M_f$ is $F$-regular with respect to the Cartier algebra $\mathcal{C}$ generated by the $\kappa^e f^{\lceil t p^e\rceil}$. By \cite[Theorem 3.11]{blicklep-etestideale} we thus have \[\tau(M, f^t) = \sum_{n \geq 1} \mathcal{C}_n f \underline{M}_\mathcal{C}.\] By Lemma \ref{FPureHypersurface} we may replace $\underline{M}_\mathcal{C}$ by  $\kappa^e f^{tp^e} M$ for any sufficiently large $e$. As seen in the proof of Lemma \ref{FPureHypersurface} we have $\mathcal{C}_n=\kappa^n f^{\lceil tp^n\rceil}R$.

We compute \[\kappa^n f^{\lceil tp^n \rceil} f \kappa^e f^{tp^e} M = \kappa^{n+e} f^{\lceil t p^{n}\rceil p^e} f^{(t+1)p^e} M \subseteq \kappa^{n+e} f^{\lceil t p^{n+e} \rceil} f^{(t+1)p^e} M\] and for $t p^n \in \mathbb{Z}$ equality holds. In particular, using Lemma \ref{CartierIterationAscendingChain} we obtain that \[\sum_{n \geq 1} \kappa^n f^{\lceil tp^n \rceil} f \kappa^e f^{tp^e}  M = \kappa^n f^{ tp^n} f\kappa^e f^{tp^e}  M \text{ for some } n \gg 0.\]
Finally, we have for $n \gg 0$ and suitable $e'$
\[\kappa^n f^{ tp^n} f\kappa^e f^{tp^e}  M = \kappa^{n+e - e'} f^{t p^{n+e - e'}} \kappa^{e'} f^{(t+1)p^e} M = \kappa^{n+e -e'} f^{t p^{n+e - e'}} M,\] where the last equality is due to the $F$-regularity of $(M, \kappa)$.
\end{proof}

\begin{Bem}
\label{EffectiveComputation}
In particular, if $t = \frac{m}{p^s} \in \mathbb{Z}[\frac{1}{p}]$ we may write $t = \frac{mp^{e-s}}{p^e}$ so that $\tau(M, f^t) = \kappa^e ( f^{m'} M)$, where $m' = mp^{e-s}$.

Also note that by right-continuity (i.\,e.\ $\tau(M, f^t) = \tau(M, f^{t + \eps})$ for small $\eps > 0$) we may always assume that $t \in \mathbb{Z}[\frac{1}{p}]$ if we want to compute test modules.
\end{Bem}

Given a $\kappa$-module $M$ we have a natural map (the adjoint of $\kappa$) $C: M \to F^!M, m \mapsto (r \mapsto \kappa(rm))$ and if $N$ is an $R$-submodule of $M$ we may consider the right $\mathcal{D}^e_R$-submodule of ${F^e}^! M$ generated by $N$ which by definition is $C^e(N) \cdot \mathcal{D}^e_R$.

\begin{Le}
\label{InducedDModule}
Let $R$ be regular and $F$-finite. Let $M$ be a coherent $\kappa$-module and let $N$ be an $R$-submodule of $M$. Then ${F^e}^! \kappa^{e}(F_\ast^e N)$ is the right $\mathcal{D}_R^e$-submodule of ${F^e}^! M$ generated by the image of $N$ in ${F^e}^! M $.
\end{Le}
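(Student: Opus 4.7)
Set $K := \kappa^{e}(F_\ast^e N) \subseteq M$ and write $D := C^e(N)\cdot\mathcal{D}_R^e$ and $E := {F^e}^! K$ for the two right $\mathcal{D}_R^e$-submodules of ${F^e}^! M = \Hom_R(F_\ast^e R, M)$ that are to be compared. The plan is to verify the two inclusions separately. The easy one, $D \subseteq E$, is direct from the definitions: for $n \in N$ and $r \in F_\ast^e R$ the element $rn$ lies in $F_\ast^e N$ because $N$ is $R$-stable, hence $C^e(n)(r) = \kappa^e(rn) \in K$. Therefore $C^e(n)$ factors through $K$, i.\,e.\ $C^e(n) \in E$, and since $E$ is a right $\mathcal{D}_R^e$-submodule it contains the submodule $D$ generated by $C^e(N)$.

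For the reverse inclusion $E \subseteq D$ the substantive step is to reduce the problem to an equality of $R$-submodules of $M$ via the Morita equivalence of Proposition \ref{EquivalenceModRModD}. Its quasi-inverse $G := (-) \otimes_{\mathcal{D}_R^e} F_\ast^e R$ is exact and reflects equalities of subobjects, so it suffices to show $G(D) = G(E)$ as submodules of $G({F^e}^! M) = M$. Tautologically $G(E) = K$, so everything hinges on computing $G(D)$.

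To this end I will present $D$ as the image of the $\mathcal{D}_R^e$-linear map
\[
\bigoplus_{n \in N}\mathcal{D}_R^e \longrightarrow {F^e}^! M, \qquad (\psi_n)_{n} \longmapsto \sum_{n} C^e(n)\cdot\psi_n,
\]
and then apply $G$. The main observation, which I expect to be the principal subtlety, is that under the canonical identification $\mathcal{D}_R^e = \End_R(F_\ast^e R) = {F^e}^!F_\ast^e R$, the $n$-th component $\psi \mapsto C^e(n)\cdot\psi = C^e(n)\circ\psi$ agrees with ${F^e}^!$ applied to the $R$-linear morphism $C^e(n): F_\ast^e R \to M$; this is immediate by evaluation at $\psi = \mathrm{id}$. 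Consequently, $G$ recovers the morphism $C^e(n)$ itself on each summand, and $G(D)$ equals the sum of the images of the $C^e(n): F_\ast^e R \to M$. That image is $\{\kappa^e(rn) : r \in F_\ast^e R\} = \kappa^e(F_\ast^e(Rn))$; summing over $n \in N$ and using that $N$ is $R$-stable so that $\sum_{n \in N} Rn = N$, we obtain $G(D) = \kappa^e(F_\ast^e N) = K = G(E)$. By the equivalence this forces $D = E$, completing the proof.
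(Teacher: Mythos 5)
Your proof is correct, and the first inclusion ($C^e(N)\cdot\mathcal{D}^e_R \subseteq {F^e}^!\kappa^e(F_\ast^e N)$) is argued exactly as in the paper. For the substantive reverse inclusion, however, you take a genuinely different route. The paper works by hand: it localizes so that $F_\ast^e R$ becomes free (Kunz), fixes a basis $b_1,\dots,b_s$ with coordinate projections $p_i\in\End_R(F_\ast^e R)=\mathcal{D}^e_R$, and decomposes an arbitrary $\varphi\in{F^e}^!\kappa^e(F_\ast^e N)$ as $\sum_i \psi_i\circ p_i$ with $\psi_i\in C^e(N)$ --- essentially the dual-basis lemma run explicitly. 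You instead invoke the Morita equivalence of Proposition \ref{EquivalenceModRModD} as a black box: present $D$ as the image of $\bigoplus_n {F^e}^!\bigl(C^e(n)\bigr)$, apply the exact quasi-inverse $G=(-)\otimes_{\mathcal{D}^e_R}F_\ast^e R$, and use naturality of $G\circ{F^e}^!\cong\id$ together with $\sum_{n\in N}Rn=N$ to see that both $G(D)$ and $G(E)$ equal $\kappa^e(F_\ast^e N)$ inside $M$. The two arguments rest on the same underlying fact (that $F_\ast^e R$ is a progenerator), but yours is basis-free and avoids localizing, at the cost of having to be careful that $G$ preserves images and direct sums and that the identifications of $G(E)$ and $G({F^e}^!M)$ with $K$ and $M$ are the natural ones (which you handle correctly); the paper's version is more elementary and self-contained. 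One cosmetic remark: your justification "immediate by evaluation at $\psi=\id$" for identifying the component map with ${F^e}^!\bigl(C^e(n)\bigr)$ implicitly uses that both maps are right $\mathcal{D}^e_R$-linear out of the free rank-one module $\mathcal{D}^e_R$, so agreement on the generator $\id$ suffices; it would be worth saying so explicitly.
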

\begin{proof}
Clearly, ${F^e}^!\kappa^e (F_\ast^e N)$ is a right $\mathcal{D}^e_R$-submodule of ${F^e}^! M$. So one inclusion is dealt with once we show that $C^e(N) \subseteq {F^e}^! \kappa^e(F_\ast^e N)$. If $n \in N$ then $C^e(n): F^e_\ast R \to M, r \mapsto \kappa^e(rn)$ and $C^e(rn) \in \kappa^e(F_\ast^e N)$ for all $r \in F_\ast^e R$.

For the other inclusion we may assume that $R$ is local. Hence, $F_\ast^e R$ is free of rank $s = p^{e \dim R}$ since $R$ is regular and $F$-finite (see \cite[Theorem 2.1]{kunzpositive}). Fix a basis $b_1, \ldots, b_s$ of $F_\ast^e R$ and let $\varphi: F_\ast^e R \to \kappa^e(F_\ast^e N)$ be an element of ${F^e}^! \kappa^e(F_\ast^e N)$. Each $\varphi(b_i)$ is of the form $\kappa^e(n_i)$ for some $n_i \in N$. We then can write $\varphi$ as $\sum_{i=1}^s \psi_{i} \circ p_i$, where $p_i: F_\ast^e R \to F_\ast^e R, p_i(b_j) = \delta_{ij}$ and $\psi_i(r) = \kappa^e(r n_i)$. Then $p_i \in \mathcal{D}^e_R$ and $\psi_i \in C^e(N)$ as desired.
\end{proof}

\begin{Bsp} Let $M$ be a Cartier module on $R$ and let $\gamma: \Spec R \to \Spec R[t]$ be the graph embedding along $f$. Then one has, in general, a proper inclusion $\gamma_\ast(C^e(M)) \mathcal{D}^e_R \subseteq \gamma_\ast({C^e}(M) \mathcal{D}^e_R) \mathcal{D}^e_R$. Here we let $\mathcal{D}^e_R$ act via the inclusion $\mathcal{D}^e_R \to \mathcal{D}^e_{R[t]}$.
Also note that $C^e(M) \mathcal{D}_R^e = {F^e}^! M$.

As an example for this let $R = k[y]$ and $M = R$ with twisted Cartier structure $\kappa \cdot y$, where $\kappa(y^i) = \delta_{i (p-1)}$ for $0 \leq i \leq p-1$. Then ${F^e}^! R$ contains $\kappa^e$ and we claim that this is not contained in $\gamma_\ast(C^e(M)) \mathcal{D}^e_R$. Indeed, any element $\psi$ in $\gamma_\ast(C^e(M)) \mathcal{D}^e_R$ is of the form
\[\psi(y^i t^j) = \sum_{l =0}^s \kappa(y^{\frac{p^e -1}{p-1}} r_l P_l(y^i) y^j),\]
where $P_l \in \mathcal{D}^e_R$ and we used that $(\kappa y)^e = \kappa^e y^{1 + p + \ldots + p^{e-1}}$.
Here $\kappa^e$ is given by acting on basis elements $y^i t^j \mapsto 1$ if $i+j = p^e - 1$ and $0$ else. To see the claim let now $i = 0$ and $j = p^e-1$. Then we can write $\psi(t^{p^e -1})$ as $y \cdot \sum \ldots$ and clearly this cannot evaluate to $1$. Hence, the inclusion is strict.
\end{Bsp}

Combining the two previous lemmata we obtain the following

\begin{Ko}
\label{TestModuleDModuleEquivalence}
Let $R$ be regular and essentially of finite type over an $F$-finite field. Let $(M, \kappa)$ be an $F$-regular Cartier module and let $f \in R$ be a non zero-divisor on $M$ and $t \in \mathbb{Z}[\frac{1}{p}]$. Then for $e \gg 0$ and $t = \frac{m}{p^e}$ we have ${F^e}^! \tau(M, f^{t}) = C^e(f^{m} M) \cdot \mathcal{D}^e_R$.
\end{Ko}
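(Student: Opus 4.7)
The plan is to simply combine the two previous lemmata, namely Lemma \ref{TestModuleComputationHypersurface} (which computes $\tau(M, f^t)$ as $\kappa^e(f^{tp^e} M)$ for $e \gg 0$) with Lemma \ref{InducedDModule} (which identifies ${F^e}^!\kappa^e(F^e_\ast N)$ with the right $\mathcal{D}^e_R$-submodule of ${F^e}^! M$ generated by the image of $N$ under $C^e$).

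First I would pin down the choice of $e$. Since $t \in \mathbb{Z}[\tfrac{1}{p}]$ we may write $t = m/p^e$ for any $e$ exceeding the $p$-adic valuation bound of the denominator, so that $tp^e = m \in \mathbb{Z}$. By Lemma \ref{TestModuleComputationHypersurface} (combined with Remark \ref{EffectiveComputation}), for all $e$ sufficiently large we have
\[
\tau(M, f^t) \;=\; \kappa^e(f^{tp^e} M) \;=\; \kappa^e(f^{m} M).
\]
Choose $e$ large enough that both requirements hold simultaneously.

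Next I would apply ${F^e}^!$ to this equality and invoke Lemma \ref{InducedDModule} with the $R$-submodule $N := f^m M \subseteq M$. That lemma, applicable since $(M,\kappa)$ is $F$-pure (and in particular a coherent $\kappa$-module), yields
\[
{F^e}^!\bigl(\kappa^e(F^e_\ast f^m M)\bigr) \;=\; C^e(f^m M)\cdot \mathcal{D}^e_R,
\]
the right-hand side being the right $\mathcal{D}^e_R$-submodule of ${F^e}^! M$ generated by the image of $f^m M$. Stringing the two equalities together gives the corollary.

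There is no real obstacle here; the only thing to be careful about is that the two ``for $e \gg 0$'' conditions (stabilization of the test module description, and the integrality $tp^e \in \mathbb{Z}$) can be imposed simultaneously, which is immediate. The content of the corollary is that the preceding two lemmata fit together without friction to convert a statement about iterated Cartier structures into one about right $\mathcal{D}^e_R$-submodules of ${F^e}^! M$, which will be the form needed in the next section to make contact with the Bernstein--Sato polynomial.
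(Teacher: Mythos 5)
Your proposal is correct and is exactly the paper's argument: the paper's proof reads verbatim ``Immediate from Lemma \ref{TestModuleComputationHypersurface} and Lemma \ref{InducedDModule},'' and you have simply spelled out the (straightforward) bookkeeping of choosing $e$ large enough that $tp^e = m$ is an integer and the stabilization in Lemma \ref{TestModuleComputationHypersurface} has occurred. The only stray remark is that Lemma \ref{InducedDModule} needs $M$ only to be a coherent $\kappa$-module, so invoking $F$-purity there is unnecessary (though harmless).
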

\begin{proof}
Immediate from Lemma \ref{TestModuleComputationHypersurface} and Lemma \ref{InducedDModule}.
\end{proof}

\section{Test modules and Bernstein-Sato polynomials}

In this section we prove the main result of this paper. That is, we show that the roots of the Bernstein-Sato polynomials for $e$ sufficiently large are precisely the $F$-jumping numbers in the range $(0,1]$ of the test module filtration.

Recall that given a morphism $\gamma: \Spec R \to \Spec R[t]$ and a right $\mathcal{D}^e_R$-module $M$ we denote the image of the natural map $\gamma_\ast M \to \gamma_+ M$ again by $\gamma_\ast M$. In particular, $\gamma_\ast M$ is then contained in $\Hom_{R[t]}(F_\ast^e R[t], \gamma_\ast M)$ (see Proposition \ref{DModPushforward}).

\begin{Le}
Let $R$ be a regular $F$-finite ring and $(M, \kappa)$ a Cartier module with adjoint $C$. Let $f \in R$ and denote by $\gamma: \Spec R \to \Spec R[t]$ the graph embedding along $f$. Then we have an  eigenspace decomposition \[(\gamma_\ast C^e(M)) \mathcal{D}_R^e[\theta_1, \ldots, \theta_{p^{e-1}}] = \bigoplus_{i \in \mathbb{F}_p^e} (\gamma_\ast C^e(M)) \mathcal{D}_R^e \circ \pi_{i},\] where \[\pi_i: F^e_\ast R[t] \to F^e_\ast R[t],\quad \pi_i(rt^j) =\begin{cases} rt^j,\, j = \sum_{l = 1}^{e} i_l p^{l-1},\\
                0,\, \text{else}
               \end{cases}\]  is the projection onto the eigenspace.
\end{Le}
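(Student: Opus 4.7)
The plan is to exhibit each $\pi_i$ explicitly as a polynomial in the commuting Euler operators $\theta_1, \theta_p, \ldots, \theta_{p^{e-1}}$ and to observe that these polynomials form a complete system of mutually orthogonal idempotents which commute with every element of $\mathcal{D}^e_R$; the claimed decomposition then emerges as the associated spectral decomposition.

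The first task is to realize $\pi_i$ inside $\mathcal{D}^e_R[\theta_1, \ldots, \theta_{p^{e-1}}]$. By Lemma~\ref{DiffOpFormulae}(e) the $\theta_{p^{l-1}}$ pairwise commute, and the argument in the proof of Lemma~\ref{EigenspaceDecompExistence} gives $\prod_{k=0}^{p-1}(\theta_{p^{l-1}} - k) = 0$ (the multisets $\{0,\ldots,p-1\}$ and $\{0,-1,\ldots,-(p-1)\}$ coincide in $\mathbb{F}_p$). Lagrange interpolation in each $\theta_{p^{l-1}}$ therefore produces
\[
\tilde\pi_i \ := \ \prod_{l=1}^{e}\ \prod_{\substack{k \in \{0,\ldots,p-1\} \\ k \neq i_l}} \frac{\theta_{p^{l-1}} - k}{i_l - k}\ \in\ \mathcal{D}^e_R[\theta_1, \ldots, \theta_{p^{e-1}}].
\]
Using $\theta_{p^{l-1}}(r t^j) = \binom{j}{p^{l-1}} r t^j$ together with Lucas' theorem (so $\binom{j}{p^{l-1}} \equiv j_{l-1} \bmod p$, where $j_0, \ldots, j_{e-1}$ are the $p$-adic digits of $j$), I will check that $\tilde\pi_i$ acts on $F^e_\ast R[t]$ exactly as the $\pi_i$ in the statement, under the identification $i_l \leftrightarrow j_{l-1}$ for $j = \sum_l i_l p^{l-1}$. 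The completeness relation $\sum_i \pi_i = \mathrm{id}$ and the orthogonality $\pi_i \pi_{i'} = \delta_{i,i'}\pi_i$ follow immediately, since $(i_1,\ldots,i_e) \mapsto \sum_l i_l p^{l-1}$ is a bijection $\mathbb{F}_p^e \to \{0, \ldots, p^e - 1\}$.

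The second ingredient is that $\mathcal{D}^e_R$ commutes with each $\theta_{p^{l-1}}$ inside $\mathcal{D}^e_{R[t]}$. This is immediate from the description of the inclusion $\mathcal{D}^e_R \hookrightarrow \mathcal{D}^e_{R[t]}$ recalled in Section~1: any $P \in \mathcal{D}^e_R = \End_R(F^e_\ast R)$ acts componentwise on $F^e_\ast R[t] = \bigoplus_{j=0}^{p^e-1} F^e_\ast R \cdot t^j$ via $P(at^j) = P(a)t^j$, while each $\theta_{p^{l-1}}$ is scalar on every summand. Hence any $Q \in \mathcal{D}^e_R[\theta_1, \ldots, \theta_{p^{e-1}}]$ admits a normal form $Q = \sum_\alpha Q_\alpha \, \theta_1^{\alpha_1}\cdots\theta_{p^{e-1}}^{\alpha_e}$ with $Q_\alpha \in \mathcal{D}^e_R$, and right-multiplication by $\pi_i$ absorbs each $\theta_{p^{l-1}}$ into the scalar $i_l$, giving $Q \pi_i = \bigl(\sum_\alpha Q_\alpha \, i_1^{\alpha_1}\cdots i_e^{\alpha_e}\bigr)\pi_i \in \mathcal{D}^e_R \circ \pi_i$.

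Combining these, for every $\varphi \in \gamma_\ast C^e(M)$ and every $Q$ as above one obtains $\varphi Q = \varphi Q \sum_i \pi_i = \sum_i (\varphi \cdot Q_{(i)}) \circ \pi_i$, which lies in $\sum_i (\gamma_\ast C^e(M))\mathcal{D}^e_R \circ \pi_i$; this is the non-trivial containment, while the reverse is evident from $\pi_i \in \mathcal{D}^e_R[\theta_1, \ldots, \theta_{p^{e-1}}]$. The sum is direct because $(\gamma_\ast C^e(M))\mathcal{D}^e_R \circ \pi_i$ is contained in the joint eigenspace of the $\theta_{p^{l-1}}$ with eigenvalues $(i_l)_l$, and distinct eigenspaces meet trivially by Lemma~\ref{EigenspaceDecompExistence}. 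The only real point of friction—and the main bookkeeping obstacle—will be the index shift $i_l \leftrightarrow j_{l-1}$ between the tuple $i \in \mathbb{F}_p^e$ and the $p$-adic digits of $j = \sum_l i_l p^{l-1}$, together with the tacit lifting of $\mathbb{F}_p$-valued eigenvalues to $\{0, \ldots, p-1\} \subset \mathbb{Z}$; no deeper ingredient beyond Lucas and Lagrange is required.
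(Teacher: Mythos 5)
Your proposal is correct and follows the same essential route as the paper: Lucas' theorem identifies $\theta_{p^{l-1}}$ as acting on $F^e_\ast R\cdot t^j$ by the $l$-th $p$-adic digit of $j$, so the $\pi_i$ are exactly the joint eigenprojections, and the commutation of $\mathcal{D}^e_R$ with the Euler operators lets one absorb every $\theta$-factor into a scalar after composing with $\pi_i$. Your version is somewhat more explicit than the paper's (which invokes Lemma \ref{EigenspaceDecompExistence} abstractly and leaves the normal-form step $Q\pi_i\in\mathcal{D}^e_R\circ\pi_i$ implicit), but this is a difference of presentation, not of method.
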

\begin{proof}
We denote $(\gamma_\ast C^e(M)) \mathcal{D}_R^e[\theta_1, \ldots, \theta_{p^{e-1}}]$ by $N$ and its eigenspaces by $N_i$. First of all, note that Lucas' theorem shows that $\pi_i$ is the projection onto the eigenspace $i_1, \ldots, i_e$ for the $\theta_1, \ldots, \theta_{p^{e-1}}$.

Assume that $\varphi \in N_i$. Then viewing $\varphi$ as an element of $\Hom_{R[t]}(F_\ast^e R[t], \gamma_\ast M)$ we have $\varphi(rt^m) = 0$ for $m \neq i_1 + i_2p + \ldots + i_e p^{e-1}$ so that $\varphi$ factors through $\pi_i$.

In the other direction note that given $\varphi \in N$ we have $\varphi = \sum_{i\in \mathbb{F}_p^e}\varphi \circ \pi_i$ which is a decomposition of $\varphi$ in the ambient module $(\gamma_\ast F^! M)\mathcal{D}^e_R[\theta_1, \ldots, \theta_{p^{e-1}}]$ as eigenvectors. Since this decomposition is preserved by morphisms (see Lemma \ref{EigenspaceDecompExistence}) the above has to be the decomposition of $\varphi$ in $N$ as well.
\end{proof}

\begin{Prop}
\label{EigenspaceIsomorphism}
Let $R$ be a regular $F$-finite ring and $(M, \kappa)$ a Cartier module with adjoint $C$. Let $f \in R$ and denote by $\gamma: \Spec R \to \Spec R[t]$ the graph embedding along $f$. Then the $(i_1, \ldots, i_e)$-eigenspace of $(\gamma_\ast C^e(M)) \mathcal{D}_R^e[\theta_1, \ldots, \theta_{p^{e-1}}]$ is isomorphic to $C^e(f^{i_1 + i_2 p + \ldots i_ep^{e-1}} M) \cdot \mathcal{D}^e_R$ as a right $\mathcal{D}^e_R$-module.
\end{Prop}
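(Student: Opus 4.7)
Set $j_0 := i_1 + i_2 p + \ldots + i_e p^{e-1}$. By the previous lemma the eigenspace is $N_i := (\gamma_\ast C^e(M))\mathcal{D}^e_R \circ \pi_i$, and my plan is to produce the isomorphism explicitly as the \emph{evaluation at $t^{j_0}$} map
\[
\Phi \colon N_i \longrightarrow {F^e}^! M, \qquad \Phi(\varphi)(r) := \varphi(r t^{j_0}),
\]
where each $\varphi \in N_i$ is viewed as an element of $\Hom_{R[t]}(F^e_\ast R[t], \gamma_\ast M)$ via Proposition \ref{DModPushforward}. I expect $\Phi$ to be well-defined, $\mathcal{D}^e_R$-linear, and to have image exactly $C^e(f^{j_0} M) \cdot \mathcal{D}^e_R$, as well as injective.

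The key computation will be the evaluation of $\Phi$ on the generators of $N_i$. The extension of $P \in \mathcal{D}^e_R$ to $\mathcal{D}^e_{R[t]}$ sends $bt^j$ to $P(b)t^j$, so it commutes with $\pi_i$; moreover, by Proposition \ref{DModPushforward} the element $\gamma_\ast C^e(m)$ acts as $rt^j \mapsto \kappa^e(r f^j m)$, since $\gamma^\#(t) = f$. Combining these, I compute
\[
\Phi\bigl(\gamma_\ast C^e(m) \cdot P \circ \pi_i\bigr)(r) \;=\; \kappa^e\bigl(P(r) f^{j_0} m\bigr) \;=\; \bigl(C^e(f^{j_0} m) \cdot P\bigr)(r).
\]
This single identity will simultaneously give me $\mathcal{D}^e_R$-linearity of $\Phi$ (vary $P$ on the right), containment of the image in $C^e(f^{j_0} M) \cdot \mathcal{D}^e_R$, and surjectivity onto it, since every generator $C^e(f^{j_0}m) \cdot P$ of the target is hit.

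For injectivity I plan to exploit both defining features of elements $\varphi \in N_i$: each $\varphi$ factors through $\pi_i$ and is $R[t]$-linear. The first property forces $\varphi(rt^n) = 0$ whenever $n$ is not congruent to $j_0$ modulo $p^e$; for the remaining indices $n = j_0 + k p^e$, the $R[t]$-linearity of $\varphi$ yields
\[
\varphi(r t^{j_0 + k p^e}) \;=\; \varphi(r t^{j_0} \cdot t^k) \;=\; \varphi(rt^{j_0}) \cdot t^k \;=\; f^k \, \varphi(rt^{j_0}),
\]
since $t$ acts on $\gamma_\ast M$ as multiplication by $f$. Thus $\varphi$ is entirely determined by $\Phi(\varphi)$, giving injectivity.

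The principal bookkeeping obstacle, which I would settle first, is the interaction between $\pi_i$ and the various module structures involved: one must verify that $\pi_i$, regarded as a polynomial in the $\theta_{p^{l-1}}$'s inside $\mathcal{D}^e_{R[t]}$, commutes with the extensions of operators in $\mathcal{D}^e_R$, and that the right $\mathcal{D}^e_R$-action on $N_i$ arising from precomposition with these extensions corresponds, under $\Phi$, to the intrinsic right $\mathcal{D}^e_R$-action on ${F^e}^! M$ by premultiplication. Once this compatibility is nailed down, the three verifications above reduce to direct calculations as outlined.
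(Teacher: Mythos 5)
Your proposal is correct and is essentially the paper's own argument: the paper also identifies the right $\mathcal{D}^e_R$-module $F_\ast^e R\, t^{j_0} \subseteq F_\ast^e R[t]$ with $F_\ast^e R$ and obtains the isomorphism by restricting an eigenspace element $\varphi$ to this summand, i.e.\ by evaluation at $t^{j_0}$. You merely spell out what the paper leaves implicit (the computation on generators, $\mathcal{D}^e_R$-linearity, and the injectivity via $\varphi(rt^{j_0+kp^e}) = f^k\varphi(rt^{j_0})$), all of which checks out.
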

\begin{proof}
We write $m = i_1 + i_2 p + \ldots + i_e p^{e-1}$. Clearly, the right $\mathcal{D}_R^e$-submodule $F_\ast^e R \cdot t^{m}$ of $F_\ast^e R[t]$ is isomorphic to $F_\ast^e R$ by sending $t^{m}$ to $1$. In particular, we have a commutative diagram
\[
 \begin{xy} \xymatrix{F_\ast^e R t^m  \ar[d] \ar[r]^{\varphi} & \gamma_\ast M\\
F_\ast^e R \ar[ru]_{\varphi( f^m )}}
 \end{xy}
\]
for any element $\varphi$ of the eigenspace. This induces the desired isomorphism of right $\mathcal{D}^e_R$-modules.
\end{proof}

\begin{Ko}
\label{QuotientEigenspaceIsomorphism}
The quotient \[(\gamma_\ast C^e(M))\mathcal{D}^e_R[\theta_1, \ldots, \theta_{p^e-1}]/(\gamma_\ast C^e(fM)) \mathcal{D}^e_R[\theta_1, \ldots, \theta_{p^e-1}]\] is a right $\mathcal{D}^e_R[\theta_1, \ldots, \theta_{p^e-1}]$-module and the $(i_1,\ldots, i_e)$-eigenspace of the quotient is isomorphic to \[C^e(f^{i_1 + i_2 p + \ldots + i_ep^{e-1}} M) \cdot \mathcal{D}^e_R/C^e(f^{1+i_1 + i_2 p + \ldots + i_ep^{e-1}} M) \cdot \mathcal{D}^e_R\] as a right $\mathcal{D}^e_R$-module.
\end{Ko}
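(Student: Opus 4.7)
The plan is to derive the corollary directly from Proposition \ref{EigenspaceIsomorphism}, applied twice, combined with Lemma \ref{EigenspaceDecompExistence}. Write $N := (\gamma_\ast C^e(M))\mathcal{D}^e_R[\theta_1, \ldots, \theta_{p^{e-1}}]$ and $N' := (\gamma_\ast C^e(fM))\mathcal{D}^e_R[\theta_1, \ldots, \theta_{p^{e-1}}]$. Both are right $\mathcal{D}^e_R[\theta_1, \ldots, \theta_{p^{e-1}}]$-modules by construction, and $N' \subseteq N$ follows at once from $fM \subseteq M$ (which implies $C^e(fM) \subseteq C^e(M)$), so the quotient $N/N'$ inherits a right $\mathcal{D}^e_R[\theta_1, \ldots, \theta_{p^{e-1}}]$-module structure.

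Next I would observe that the proof of Proposition \ref{EigenspaceIsomorphism} applies to an arbitrary $R$-submodule $L \subseteq M$ in place of $M$: the assignment $\varphi \mapsto \widetilde\varphi$ where $\widetilde\varphi(r) := \varphi(r t^m)$ and $m := i_1 + i_2 p + \ldots + i_e p^{e-1}$ restricts to a right $\mathcal{D}^e_R$-linear isomorphism from the $(i_1,\ldots,i_e)$-eigenspace of $(\gamma_\ast C^e(L))\mathcal{D}^e_R[\theta_1, \ldots, \theta_{p^{e-1}}]$ onto $C^e(f^m L)\cdot \mathcal{D}^e_R$. The key identity is $(\gamma_\ast C^e(n))(r t^m) = \kappa^e(r f^m n) = C^e(f^m n)(r)$ for $n \in L$, together with the observation that operators in $\mathcal{D}^e_R[\theta_1, \ldots, \theta_{p^{e-1}}]$ preserve the $t^j$-grading on $F^e_\ast R[t]$ (since $\mathcal{D}^e_R$ acts componentwise and the $\theta_{p^k}$ act as scalars on each graded piece by Lucas). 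Applying this with $L = M$ and $L = fM$ (using $f \cdot f^m M = f^{m+1} M$) yields right $\mathcal{D}^e_R$-linear isomorphisms $N_i \cong C^e(f^m M)\mathcal{D}^e_R$ and $N'_i \cong C^e(f^{m+1} M)\mathcal{D}^e_R$. These are compatible with the inclusion $N' \hookrightarrow N$ because they are restrictions of the same assignment $\varphi \mapsto \widetilde\varphi$.

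To conclude, Lemma \ref{EigenspaceDecompExistence} states that the eigenspace decomposition is preserved under $\mathcal{D}^e_R[\theta_1, \ldots, \theta_{p^{e-1}}]$-morphisms, so it passes to the quotient map $N \twoheadrightarrow N/N'$. Hence the $(i_1,\ldots,i_e)$-eigenspace of $N/N'$ equals $N_i/N'_i$, which by the previous step is $C^e(f^m M)\mathcal{D}^e_R/C^e(f^{m+1}M)\mathcal{D}^e_R$ as desired. The only point that requires care is the compatibility of the two applications of Proposition \ref{EigenspaceIsomorphism}; but since the defining assignment $\varphi \mapsto [r \mapsto \varphi(r t^m)]$ is uniform in the submodule $L$, this compatibility is automatic.
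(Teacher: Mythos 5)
Your argument is correct and takes essentially the same route as the paper's own (much terser) proof: identify the $(i_1,\ldots,i_e)$-eigenspaces of numerator and denominator via Proposition \ref{EigenspaceIsomorphism} and use that the eigenspace decomposition is preserved by the canonical projection, so that the eigenspace of the quotient is the quotient of eigenspaces. The extra care you take --- that Proposition \ref{EigenspaceIsomorphism} applies verbatim to the $R$-submodule $fM$ (which need not be a Cartier submodule) and that both identifications are restrictions of the same assignment $\varphi \mapsto [r \mapsto \varphi(rt^m)]$ --- is precisely the detail the paper leaves implicit.
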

\begin{proof}
The first claim is just Lemma \ref{BSPQuotientDModule}. Since the eigenspace decomposition is preserved by the canonical projection we get the desired isomorphism on the quotient by Proposition \ref{EigenspaceIsomorphism}.
\end{proof}

Following \cite{mustatabernsteinsatopolynomialspositivechar} we introduce some notation. Namely, given $\lambda \in (0,1]$ we can write it uniquely as \[\lambda = \sum_{i \geq 1} \frac{c_i(\lambda)}{p^i}\] with all $c_i(\lambda) \in \{0, \ldots, p-1\}$, and such that infinitely many of them are non-zero. Moreover, one obtains for every $e \geq 1$ that \[\sum_{i=1}^e\frac{c_i(\lambda)}{p^i} = \frac{\lceil \lambda p^e\rceil -1}{p^e}.\]

We are now ready to state and prove our main result:

\begin{Theo}
\label{MainResult}
Let $R$ be regular essentially of finite type over an $F$-finite field. Let $(M, \kappa)$ be an $F$-regular Cartier module and $f \in R$ a non zero-divisor on $M$. The roots of the Bernstein-Sato polynomials $b^e_{M,f}(s)$ are given for $e$ sufficiently large by $\frac{\lceil \lambda p^e \rceil -1}{p^e}$, where $\lambda$ varies over the $F$-jumping numbers of the test module filtration $\tau(M, f^t)$ for $t \in (0,1]$.
\end{Theo}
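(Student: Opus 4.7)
The plan is to combine the eigenspace description of Corollary \ref{QuotientEigenspaceIsomorphism} with the dictionary between test modules and right $\mathcal{D}^e_R$-submodules of ${F^e}^! M$ furnished by Corollary \ref{TestModuleDModuleEquivalence}, and then use the finiteness of the $F$-jumping numbers in $(0,1]$ to convert ``eigenspace is non-trivial'' into ``a jumping number lies in the half-open interval $(m/p^e, (m+1)/p^e]$''.

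To set up the translation, fix $i = (i_1, \ldots, i_e) \in \mathbb{F}_p^e$ and set $m := i_1 + i_2 p + \cdots + i_e p^{e-1} \in \{0, \ldots, p^e - 1\}$, so that the associated eigenvalue in Definition \ref{BSPDefinition} equals $\frac{m}{p^e}$. By Corollary \ref{QuotientEigenspaceIsomorphism} the $i$-eigenspace is isomorphic, as a right $\mathcal{D}^e_R$-module, to $C^e(f^m M)\cdot \mathcal{D}^e_R \,/\, C^e(f^{m+1} M)\cdot \mathcal{D}^e_R$. Choosing $e$ large enough that Corollary \ref{TestModuleDModuleEquivalence} applies to every $m \in \{0, \ldots, p^e\}$ simultaneously, each factor becomes of the form ${F^e}^! \tau(M, f^{m/p^e})$; since ${F^e}^!$ reflects isomorphisms by Proposition \ref{EquivalenceModRModD}, the $i$-eigenspace is non-trivial iff $\tau(M, f^{m/p^e}) \neq \tau(M, f^{(m+1)/p^e})$.

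Next I would enlarge $e$ further so that $1/p^e$ is strictly smaller than the minimum gap between any two distinct $F$-jumping numbers in $(0,1]$; this is possible because the set of such jumping numbers is finite by \cite[Corollary 4.19]{blicklep-etestideale}. Since the test module filtration is decreasing, right-continuous and constant between consecutive $F$-jumping numbers, the inequality above holds precisely when the half-open interval $(m/p^e, (m+1)/p^e]$ contains a (necessarily unique) $F$-jumping number $\lambda$, and in that case $m = \lceil \lambda p^e \rceil - 1$. This yields the desired bijection between $\Gamma^e_f$ and the set of $F$-jumping numbers in $(0,1]$, turning the root $\frac{m}{p^e}$ of $b^e_{M,f}(s)$ into $\frac{\lceil \lambda p^e\rceil - 1}{p^e}$.

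The main technical obstacle is the uniformity step: Corollary \ref{TestModuleDModuleEquivalence} is stated only for a single $t = m/p^e$ and ``$e \gg 0$ depending on $t$'', while here I need the equality for all $m \in \{0, \ldots, p^e\}$ at one sufficiently large $e$. I expect to handle this by observing that on $(0,1]$ the filtration $\tau(M, f^t)$ takes only finitely many distinct values, so the individual bounds on $e$ provided by that corollary can be replaced by a single uniform bound.
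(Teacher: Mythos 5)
Your argument is correct and is essentially the paper's own proof: the same chain of reductions via Corollary \ref{QuotientEigenspaceIsomorphism}, Corollary \ref{TestModuleDModuleEquivalence} and the fact that ${F^e}^!$ reflects isomorphisms (Proposition \ref{EquivalenceModRModD}), merely traversed from the eigenvalue side rather than from the jumping-number side. The uniformity issue you flag is real but is left implicit in the paper's proof as well, and your proposed fix via the finiteness of the $F$-jumping numbers in $(0,1]$ is the intended one.
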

\begin{proof}
By definition $\lambda \in (0,1]$ is an $F$-jumping number if and only if for $e \gg 0$ we have \[\tau(M, \kappa, f^{\frac{\lceil \lambda p^e \rceil -1}{p^e}}) \neq \tau(M, \kappa, f^{\frac{\lceil \lambda p^e \rceil}{p^e}}).\] Using the fact that ${F^e}^!$ is fully faithful (Proposition \ref{EquivalenceModRModD}) this inequality is equivalent to \[{F^e}^! \tau(M ,\kappa, f^{\frac{\lceil \lambda p^e \rceil -1}{p^e}}) \neq {F^e}^! \tau(M ,\kappa, f^{\frac{\lceil \lambda p^e \rceil}{p^e}})\] for all $e \gg 0$. We write $\lceil \lambda p^e\rceil -1 = i_1 + i_2 p + \ldots +i_e p^{e-1}$. Then by Corollary \ref{TestModuleDModuleEquivalence} the above means that \[C^e(f^{i_1 + i_2 p + \ldots + i_ep^{e-1}} M) \cdot \mathcal{D}^e_R \neq C^e(f^{1+i_1 + i_2 p + \ldots + i_ep^{e-1}} M) \cdot \mathcal{D}^e_R\] for all $e \gg 0$. Finally, by Corollary \ref{QuotientEigenspaceIsomorphism} this is equivalent to $\frac{\lceil \lambda p^e \rceil -1}{p^e}$ being a zero of $b^e_{M,f}(s)$ for all $e \gg 0$.
\end{proof}

\begin{Bem}
\begin{enumerate}[(a)]
\item{We recall that there are only finitely many $F$-jumping numbers in $(0,1]$ and that they are all rational. In particular, the limit over the $b^e_{M,f}(s)$ for $e \to \infty$ is a polynomial with rational roots.}
 \item{The case where $M$ is locally constant (and $R$ is smooth over a perfect field), i.\,e.\ when there exists a finite \'etale morphism $\varphi: \Spec S \to \Spec R$ such that $\varphi^\ast M \cong \omega_S^n$ can also be directly deduced from the constant case treated in \cite{mustatabernsteinsatopolynomialspositivechar}. This essentially boils down to the fact that differential operators along \'etale morphisms are well-behaved (one obtains an inclusion $\mathcal{D}^e_R \to \mathcal{D}^e_S$ and the natural map $M \to \varphi^! M$ is $\mathcal{D}^e_R$-linear -- see \cite[Theorem 2.2.5, Corollary 2.2.6]{massonthesis} for the first statement. The latter may be extracted from \cite[Theorem 2.2.10]{massonthesis} and \cite[2.1.3]{berthelotdmodulesarithmetiqueii}). Then one uses \cite[Theorem 8.5]{staeblertestmodulnvilftrierung} to see that the $F$-jumping numbers of $M$ are the same as that of $\omega_S$. In fact, writing this up precisely was the original motivation for this paper.}
\item{Note that Musta\c{t}\u{a}'s result (\cite[Theorem 6.7]{mustatabernsteinsatopolynomialspositivechar}) is valid for any $e \geq 1$ while we only obtain a result for $e \gg 0$.}
\end{enumerate}

\end{Bem}

\section{A comparison with Stadnik's $b$-functions}
\label{stadniksbfunctions}

The goal of this section is to point out the relation of our Bernstein-Sato polynomials to the $b$-functions of Stadnik defined in \cite[Definition 4.4]{stadnikbernsteinsatopoly}. Stadnik works in the context of unit $R[F]$-modules which were introduced by Lyubeznik \cite{lyubFfinite} and Emerton-Kisin \cite{emertonkisinrhunitfcrys}. We briefly recall the relevant notions. A \emph{unit $R[F]$-module} is an $R$-module $\mathcal{M}$ equipped with a structural isomorphsim $\theta \colon F^*\mathcal{M} \xrightarrow{\cong} \mathcal{M}$. A \emph{root} of $\mathcal{M}$ is an $R$-module $M$ together with an injective $R$-linear map $\Phi: M \to F^\ast M$ such that $\colim_e {F^e}^\ast M$ and $\mathcal{M}$ are isomorphic as unit $R[F]$-modules. In particular, if $M$ is a root for $\mathcal{M}$ then $\gamma_\ast M \otimes \omega_{R[t]/R}^{-1}$ is a root for $\gamma_+ \mathcal{M}$, where $\gamma: \Spec R \to \Spec R[t]$ is the graph embedding for some hypersurface $f$ (Proposition \ref{DmodpushforwardlimitCartier} above shows that $\gamma_+$ is the $\mathcal{D}$-module pushforward and \cite[14.3.10, 15.2]{emertonkisinrhunitfcrys} shows that the $\mathcal{D}$-module pushforward coincides with the pushforward on unit $R[F]$-modules). By abuse of notation we will denote the image of the natural map $\gamma_\ast M \otimes \omega^{-1}_{R[t]/R} \to {F^e}^\ast (\gamma_\ast M \otimes \omega_{R[t]/R}^{-1})$ again by $\gamma_\ast M \otimes \omega^{-1}_{R[t]/R}$.

Given a sequence $i_l$ of integers in $\{0, \ldots, p-1\}$ we will refer to $\sum_{l = 0}^{e-1} i_l p^l$ as the \emph{base-$p$ expansion of $(i_1, \ldots, i_e)$}. Varying $e$ we call the number $\lim_{e \to \infty} \sum_{l = 0}^{e-1} i_l p^l/p^e$ \emph{the $p$-weighted limit of the base-$p$ expansion} if it exists.

With this notation Stadnik defines a \emph{$b$-function (for the pair $(\gamma_\ast M \otimes \omega_{R[t]/R}^{-1}, \gamma_+ \mathcal{M})$)} as any polynomial $b(s) \in \mathbb{C}[s]$ with roots in $(0,1]$ that satisfies the following property:

If $\lambda$ is a root of $b(1-s)$ then there exists an integer $n$ such that for all $e \geq 0$ the set \[\{ \lceil \lambda p^e\rceil - a \, \vert\, 0 \leq a \leq p^n\}\] contains the base $p$-expansions of the eigenvalues of the $\theta_{p^l}, l = 0, \ldots, e-1$, on the quotient \[D^e_R[\theta_1, \ldots, \theta_{p^{e-1}}] (\gamma_\ast M\otimes \omega_{R[t]/R}^{-1})/D^e_R[ \theta_1, \ldots, \theta_{p^{e-1}}]t (\gamma_\ast M\otimes \omega_{R[t]/R}^{-1}).\] The set of $b$-functions forms an ideal in $\mathbb{C}[s]$ and we denote its monic generator by $\tilde{b}_{M,f}(s)$.

The main result of \cite{stadnikbernsteinsatopoly} is that $\tilde{b}_{M,f}(s)$ is a non-zero polynomial with rational roots. We will reprove this here using Theorem \ref{MainResult} under the additional assumption that $M \otimes \omega_R$ is $F$-regular and that $f$ is not a zero-divisor on $M$.

\begin{Bem}
Note that $\Omega_{R[t]/R}$ is free of rank $1$ and $dt = d(t-f)$. By definition $\partial_t$ is the differential operator in $\Hom_R(\Omega_{R[t]/R}, R) = R \oplus Der_{R[t]/R}$ given by the dual of $dt$. In particular, we have $\partial_t = \partial_{t-f}$. Hence, applying the automorphism $t \mapsto t + f$ we are precisely in the setting where our hypersurface equation is given by $t = 0$ and the Euler operators are given by $\theta_e = t^{p^e} \partial_t^{[p^{e}]}$, where we use the inclusion $\mathcal{D}_{R[t]/R} \subseteq \mathcal{D}_{R[t]}$. The latter is the setting in which Stadnik works.

Also note that Stadnik considers the quotient \[D^e_R[t, \theta_1, \ldots, \theta_{p^{e-1}}] (\gamma_\ast M \otimes \omega_{R[t]/R}^{-1})/D^e_R[t, \theta_1, \ldots, \theta_{p^{e-1}}]t (\gamma_\ast M\otimes \omega_{R[t]/R}^{-1})\] but since $[t, \theta_i] = \theta_{i-1}$ and $t (\gamma_\ast M\otimes \omega_{R[t]/R}^{-1}) = \gamma_\ast fM \otimes \omega_{R[t]/R}^{-1}$ this quotient coincides with the one we consider.
\end{Bem}

The equivalence of our notion and that of Stadnik is obtained from the equivalence of left and right $\mathcal{D}^e_S$-modules which we recall in a special setting. For an $S$-module $M$ the $e$-th Frobenius pull back\footnote{Note that again we view $F_\ast^e S$ as an $S$-bimodule where the structure on the left is obtained by the ring isomorphism $S \to F_\ast^e S$.} ${F^e}^\ast M = F_\ast^e R \otimes_R M$ is a left $\mathcal{D}^e_S$-module via the action of $\mathcal{D}^e_S$ on $F_\ast^e S$. Tensoring with $\omega_S$ one obtains an isomorphism $\omega_S \otimes {F^e}^\ast M \cong {F^e}^! (\omega_S \otimes M) = \Hom_S(F_\ast^e S, \omega_S \otimes M)$ and the latter naturally carries the structure of a right $\mathcal{D}^e_S$-module via the action on $F_\ast^e S$. Since $\omega_S$ is invertible this induces an equivalence of categories between left and right $\mathcal{D}^e_S$-modules.

Given a smooth ring $S$ over a perfect field $k$ we can choose a set of local coordinates $x_1, \ldots, x_n$. For $i \in \mathbb{N}^n$ write $\partial_{i}^{[m_i]} = \partial_{x_{i_1}}^{[m_1]} \cdots \partial_{x_{i_n}}^{[m_n]}$. Now given any differential operator $P$ we can write it locally as \[\sum_{i \in \mathbb{N}^n} s_i \partial_i^{[m_i]},\] where almost all $i$ are zero. Then we denote the \emph{adjoint operator} \[\sum_{i} (-1)^{\sum_{j=1}^n i_j} \partial_{i}^{[m_i]} s_i\] by $P^t$. Finally, note that one has $(PQ)^t = Q^t P^t$.

\begin{Prop}
\label{EulerLeftRightInterchange}
Let $S = R[t]$ for $R$ smooth over a perfect field $k$ and $M$ an $S$-module. Then for a set of local coordinates $t, x_1, \ldots, x_n$ the right $\mathcal{D}^e_S$-module structure on $\omega_S \otimes {F^e}^\ast M$ is locally given by \[(dt \wedge dx_1 \wedge \ldots \wedge dx_n \otimes m) \cdot P = dt \wedge dx_1 \wedge \ldots \wedge dx_n \otimes P^t m\] and the isomorphism $\omega_S \otimes {F^e}^\ast M \to {F^e}^! (\omega_S \otimes M)$ is $\mathcal{D}^e_S$-linear. In particular, for $1 \otimes v \otimes m \in \omega_{S/R} \otimes_R \omega_R \otimes  {F^e}^\ast M $ one has $(1 \otimes v \otimes m) \cdot \theta_{p^l} = (1 \otimes v \otimes -\vartheta_{p^l} m )$ for any $0 \leq l \leq e-1$.
\end{Prop}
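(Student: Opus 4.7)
The plan is to verify the three assertions of the proposition in turn: the adjoint formula describing the right $\mathcal{D}^e_S$-action on $\omega_S \otimes {F^e}^\ast M$, the $\mathcal{D}^e_S$-linearity of the duality isomorphism $\omega_S \otimes {F^e}^\ast M \to {F^e}^!(\omega_S \otimes M)$, and finally the explicit computation for $\theta_{p^l}$. Throughout I would work in local coordinates, writing $\eta = dt \wedge dx_1 \wedge \ldots \wedge dx_n$ for the trivialising section of $\omega_S$, and use that $\mathcal{D}^e_S$ is locally generated by $S$ together with the divided power derivations $\partial_{x_i}^{[m]}$ and $\partial_t^{[m]}$.

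For the first assertion I would start from the classical description of the right $\mathcal{D}^e_S$-structure on $\omega_S$: on the trivialising volume form one has $\eta \cdot \partial^{[m]} = 0$ for each coordinate derivation and each $m \geq 1$, and $(f\eta) \cdot P = \eta \cdot (fP)$ then extends this by $S$-linearity on the left. The right action on $\omega_S \otimes {F^e}^\ast M$ is thereby determined by the rules $(\eta \otimes m) \cdot s = \eta \otimes sm$ and $(\eta \otimes m) \cdot \partial^{[k]} = (-1)^k \eta \otimes \partial^{[k]} m$, together with compatibility with composition; the adjoint formula $(\eta \otimes m) \cdot P = \eta \otimes P^t m$ then follows by induction on the order of $P$, using $(PQ)^t = Q^t P^t$ and the Leibniz rule for divided powers $\partial^{[m]}(fg) = \sum_{a+b=m} \partial^{[a]}(f) \partial^{[b]}(g)$ to reduce to these generators. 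For the second assertion, the isomorphism $\omega_S \otimes {F^e}^\ast M \cong {F^e}^!(\omega_S \otimes M)$ is a local form of Grothendieck duality for the finite flat morphism $F^e$, and after identifying $\omega_S \otimes F^e_\ast S$ with $F^e_\ast \omega_S$ via the duality pairing it coincides with the isomorphism of Lemma \ref{PushforwardIsoLemma}. Both sides carry right $\mathcal{D}^e_S$-actions (on the right-hand side via $\mathcal{D}^e_S \cong \End_S(F^e_\ast S)$ and precomposition), and compatibility is again checked on the generators $s \in S$ and $\partial^{[m]}$, where by the first assertion both types act through the $M$-factor in matching ways.

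Finally, to deduce the ``in particular'' statement I would apply the adjoint formula to $P = \theta_{p^l} = t^{p^l}\partial_t^{[p^l]}$. Writing this in the normal form $s \cdot \partial^{[\alpha]}$ with $s = t^{p^l}$ and $\alpha = (p^l, 0, \ldots, 0)$, the sign $(-1)^{|\alpha|} = (-1)^{p^l}$ equals $-1$ in every positive characteristic (for $p$ odd since $p^l$ is odd, for $p = 2$ since $-1 = 1$), and the reversed product is $(\partial_t^{[p^l]})^t (t^{p^l})^t = \partial_t^{[p^l]} t^{p^l} = \vartheta_{p^l}$, so $\theta_{p^l}^t = -\vartheta_{p^l}$, which yields the claimed identity. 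The main technical obstacle I anticipate is the divided-power version of ``integration by parts'' needed in the inductive step of the first assertion, since the usual Weyl-algebra proof relies on $m!$ being invertible; however, the Leibniz rule for divided powers together with $(PQ)^t = Q^t P^t$ make the argument go through formally in the divided-power setting.
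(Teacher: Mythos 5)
Your proposal is correct in outline but takes a genuinely different route from the paper. The paper reduces to $M=S$, factors the isomorphism $\omega_S\otimes {F^e}^\ast M\to {F^e}^!(\omega_S\otimes M)$ through the Cartier-type map $\Sigma\colon ds\otimes f\mapsto C^e(ds)\cdot f$, and then simply cites Berthelot (\emph{op.\ cit.}, Proposition 1.1.7(i) and Corollary 1.2.6, together with the identification of $\mathcal{D}^{(e)}_S$ with $\mathcal{D}^e_S$) for the compatibility of the left--right switch with the transpose at finite level; you instead propose a direct verification in local coordinates on the generators $s\in S$ and $\partial^{[m]}$, $m<p^e$. Your approach is more elementary and self-contained (and your sign computation $\theta_{p^l}^t=(-1)^{p^l}\partial_t^{[p^l]}t^{p^l}=-\vartheta_{p^l}$, including the $p=2$ case, is exactly right), while the paper's buys brevity at the cost of importing Berthelot's machinery. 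One caveat: the real content of the proposition sits in the step you treat most briefly. The right structure on $\omega_S\otimes{F^e}^\ast M$ is the one \emph{transported} from precomposition on $\Hom_S(F^e_\ast S,\omega_S\otimes M)$, so your first assertion (deriving the transpose formula from the classical Lie-derivative structure on $\omega_S$) shifts all the weight onto the second, where the required identity is the adjunction property of the Cartier operator, $\kappa^e\bigl(P(x)\,f\,\eta\bigr)=\kappa^e\bigl(x\,P^t(f)\,\eta\bigr)$ for $P\in\mathcal{D}^e_S$; this is a finite computation on generators but should be made explicit. Also, the isomorphism in question is $\Sigma$ above, not the one of Lemma \ref{PushforwardIsoLemma} (which concerns $F^e_\ast M\otimes\End_R(F^e_\ast R)\to\Hom_R(F^e_\ast R,M)$), so that identification should be corrected.
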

\begin{proof}
First of all, we reduce to the case $M = S$. The $\mathcal{D}^e_S$-module structure on ${F^e}^\ast M = F_\ast^e S \otimes_S M$ and on ${F^e}^! (\omega_S \otimes M) = \Hom_S(F_\ast^e S, \omega_S \otimes M)$ is given by the action of $\mathcal{D}^e_S$ on $F_\ast^e S$. Moreover, the isomorphism $\omega_S \otimes {F^e}^\ast M \to {F^e}^! (\omega_S \otimes M)$ factors as the composition of the canoncial isomorphisms \[\begin{xy}\xymatrix@1{ \omega_S \otimes {F^e}^\ast M \ar[r]& \omega_S \otimes {F^e}^\ast S \otimes M \ar[r]^>>>>>{\Sigma \otimes \id} & {F^e}^! \omega_S \otimes M \ar[r]& {F^e}^! (\omega_S \otimes M) } \end{xy} ,\] where $\Sigma: \omega_S \otimes {F^e}^\ast S \to {F^e}^! \omega_S$ denotes the isomorphism $ds \otimes f \mapsto C^e(ds) \cdot f = [x \mapsto \kappa^e(xf ds)]$.

Now the claim follows from \cite[Proposition 1.1.7 (i), Corollary 1.2.6]{berthelotdmodulesarithmetiqueii} with $\mathcal{M} = \omega_S$ and $\mathcal{E} = F^e_\ast S$: Since $k$ is perfect we have $\mathcal{D}^e_{S/k} = \mathcal{D}^e_{S}$. By \cite[Proposition 2.2.7]{berthelotdmodulesarithmetiquei} the image of $\mathcal{D}^{(e)}_S$ in $\mathcal{D}_S$ corresponds to $\mathcal{D}^{e}_S$ so that Berthelot's results also apply to $\mathcal{D}^e_S$-modules.
\end{proof}

\begin{Ko}
\label{EigenspaceEulerLeftRight}
Let $S = R[t]$ for a regular ring $R$ essentially of finite type over an $F$-finite field and $M$ an $S$-module. Then ${F^e}^\ast M$ admits a non-trivial $(-i_1, \ldots, -i_e)$-eigenspace for the $-\vartheta$ if and only if ${F^e}^! (M \otimes \omega_S)$ admits a non-trivial $(i_1, \ldots, i_e)$-eigenspace for the $\theta$.
\end{Ko}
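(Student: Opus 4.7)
The plan is to apply Proposition~\ref{EulerLeftRightInterchange}, which supplies a canonical isomorphism of right $\mathcal{D}^e_S$-modules
\[
\omega_S \otimes_S {F^e}^\ast M \xrightarrow{\ \sim\ } {F^e}^!(\omega_S \otimes M),
\]
together with the explicit action formula $(1 \otimes v \otimes m) \cdot \theta_{p^l} = 1 \otimes v \otimes (-\vartheta_{p^l})m$ on elements supported in a fixed top-form generator. This formula translates the right $\theta$-action on the target into the left $(-\vartheta)$-action on the factor ${F^e}^\ast M$ of the source, leaving the $\omega_S$-factor untouched.

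First I would invoke that $\omega_S$ is an invertible $S$-module, so that $\omega_S \otimes_S (-)$ is an exact equivalence of categories and in particular reflects the non-triviality of direct summands. Consequently, a common eigenspace of $\omega_S \otimes_S {F^e}^\ast M$ for a family of operators acting only on the ${F^e}^\ast M$-tensor factor is non-trivial if and only if the corresponding eigenspace of ${F^e}^\ast M$ itself is non-trivial. Combined with Lemma~\ref{EigenspaceDecompExistence}, which produces a simultaneous eigenspace decomposition for the commuting Euler operators, this reduces the corollary to an eigenvalue-by-eigenvalue comparison under the above isomorphism.

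Next I would carry out this comparison: an $(i_1,\ldots,i_e)$-eigenvector $\varphi$ for $\theta$ in ${F^e}^!(\omega_S \otimes M)$ transports, through the isomorphism, to some $1 \otimes v \otimes m \in \omega_S \otimes_S {F^e}^\ast M$ for which $-\vartheta_{p^{l-1}}$ acts on $m$ by the scalar $i_l$ for every $l$. Unwinding the labelling convention of the corollary --- namely, that an $(-i_1,\ldots,-i_e)$-eigenspace for $-\vartheta$ on ${F^e}^\ast M$ records the eigenvalues of the operators $-\vartheta_{p^{l-1}}$ --- this yields exactly the claimed equivalence, with the converse being entirely symmetric.

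The main obstacle I expect is the careful bookkeeping of signs (the interaction of the ``$-$'' in $-\vartheta$ with the sign in the formula of Proposition~\ref{EulerLeftRightInterchange} and the identity $\vartheta_{p^l} = 1 + \theta_{p^l}$ of Lemma~\ref{DiffOpFormulae}(a)), together with the mild extension from the ``smooth over a perfect field'' hypothesis of Proposition~\ref{EulerLeftRightInterchange} to the ``essentially of finite type over an $F$-finite field'' setting of the corollary; the latter should follow by working Zariski-locally and appealing to the local-on-$\Spec S$ nature of Berthelot's side-changing formulas invoked in the proof of that proposition.
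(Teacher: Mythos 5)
Your proposal is correct and takes essentially the same route as the paper, whose entire proof is: the ``only if'' direction is immediate from Proposition~\ref{EulerLeftRightInterchange}, and the converse follows by tensoring with the invertible module $\omega_S^{-1}$ to recover ${F^e}^\ast M$ from ${F^e}^!(M\otimes\omega_S)$. Your final caveat about the hypothesis gap (the corollary assumes only ``essentially of finite type over an $F$-finite field'' while Proposition~\ref{EulerLeftRightInterchange} assumes ``smooth over a perfect field'') is well observed, but note that the paper itself does not close it either --- a later remark explicitly concedes that Berthelot's side-changing results are unavailable in the more general setting --- so your proposed Zariski-local reduction is optimistic rather than wrong, and in any case the corollary is only ever applied in the smooth-over-perfect situation.
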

\begin{proof}
The ``only if''-part is immediate from Proposition \ref{EulerLeftRightInterchange}. Conversely, ${F^e}^! (M \otimes \omega_S) \otimes \omega^{-1}_S$ is canonically isomorphic to ${F^e}^\ast M$ and a similar argument applies in this case.
\end{proof}

\begin{Le}
Let $(M, \Phi)$ be a root of a unit $R[F]$-module $\mathcal{M}$ and $\gamma: \Spec R \to \Spec S = R[t]$ a closed immersion and $(\gamma_\ast M \otimes \omega_{S/R}^{-1}, \Phi \otimes \id)$ the corresponding root for $\gamma_+ \mathcal{M}$.

Then $(\gamma_\ast(\omega_R \otimes M), \tilde{C})$ and $(\omega_R \otimes M, C)$ are naturally Cartier modules and the map $\tilde{C}: \gamma_\ast (\omega_R \otimes M) \to F^! \gamma_\ast \omega_R \otimes M$ is given by the composition of $\gamma_\ast (\omega_R \otimes M) \to \gamma_\ast F^! (\omega_R \otimes M) \to F^!( \gamma_\ast \omega_R \otimes M)$, where the first map is $\gamma_\ast C$ and the second is the composition of maps described in Proposition \ref{DModPushforward}. In particular, if $N$ denotes the image of the natural map $\gamma_\ast M \otimes \omega^{-1}_{R[t]/R} \to {F^e}^\ast (\gamma_\ast M \otimes \omega_{R[t]/R}^{-1})$ then $\omega_{R[t]} \otimes N$ is the image of the natural map $\gamma_\ast( \omega_R \otimes M) \to {F^e}^! (\gamma_\ast \omega_R \otimes M)$.
\end{Le}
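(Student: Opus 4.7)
The strategy is to realize both Cartier structures as transfers of the root $F$-structure $\Phi$ under the canonical duality isomorphism $\omega \otimes F^{e*}(-) \cong F^{e!}(\omega \otimes -)$ that converts left module structures into right module structures (as in Proposition~\ref{EulerLeftRightInterchange} and the discussion preceding Proposition~\ref{EquivalenceModRModD}). Once both structures are written in this form, the lemma will reduce to the naturality of this duality with respect to the closed immersion $\gamma$.

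Concretely, I would define the Cartier structure $C$ on $\omega_R \otimes M$ as the composition
\[
C \colon \omega_R \otimes M \xrightarrow{\id \otimes \Phi} \omega_R \otimes F^*M \xrightarrow{\sim} F^!(\omega_R \otimes M),
\]
the second arrow being the duality isomorphism. The corresponding adjoint $F_\ast(\omega_R \otimes M) \to \omega_R \otimes M$ gives the Cartier map. On $\gamma_\ast(\omega_R \otimes M)$, I would \emph{define} $\tilde C$ to be $\gamma_\ast C$ followed by the base-change morphism $\gamma_\ast F^!(\omega_R \otimes M) \to F^!(\gamma_\ast\omega_R \otimes M)$ furnished by Proposition~\ref{DModPushforward}; this makes the displayed assertion about the composition into a definition. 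What then remains is to identify this $\tilde C$ with the Cartier structure obtained by tensoring the root $(\Phi \otimes \id)$ on $\gamma_\ast M \otimes \omega_{R[t]/R}^{-1}$ with $\omega_{R[t]}$ and applying the duality on $R[t]$, using the canonical identification $\omega_{R[t]} \otimes_{R[t]}(\gamma_\ast M \otimes \omega_{R[t]/R}^{-1}) \cong \gamma_\ast(\omega_R \otimes M)$ induced by $\omega_{R[t]} = \omega_R \otimes_R \omega_{R[t]/R}$.

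The main obstacle will be precisely this compatibility, which amounts to the commutativity of a square whose two routes from $\gamma_\ast(\omega_R \otimes M)$ to $F^!(\gamma_\ast\omega_R \otimes M)$ are: first the duality on $R$ and then the base-change map of Proposition~\ref{DModPushforward}; or the $\omega_{R[t]/R}$-identification followed by the duality on $R[t]$. Unwinding the $\Hom$-theoretic description of the base change from Lemma~\ref{PushforwardIsoLemma} and Proposition~\ref{DModPushforward}, this reduces to a projection-formula type identity for the invertible sheaf $\omega_{R[t]/R}$ under Frobenius pullback; the argument is formal, but the bookkeeping of the various Frobenius twists requires care.

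For the ``in particular'' assertion, observe that the natural map $\gamma_\ast M \otimes \omega_{R[t]/R}^{-1} \to F^{e*}(\gamma_\ast M \otimes \omega_{R[t]/R}^{-1})$ is the $e$-fold iterate of $\Phi \otimes \id$, so tensoring with the invertible sheaf $\omega_{R[t]}$ preserves its image. By the main assertion of the lemma this tensored map is canonically identified with the iterate $\tilde C^e \colon \gamma_\ast(\omega_R \otimes M) \to F^{e!}(\gamma_\ast\omega_R \otimes M)$, and hence $\omega_{R[t]} \otimes N$ is identified with the image of the latter, as required.
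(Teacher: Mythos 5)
Your proposal is correct and follows essentially the same route as the paper: both reduce the statement to the compatibility of the $\omega\otimes F^{e\ast}(-)\cong {F^e}^!(\omega\otimes -)$ duality and the Cartier/root adjunction with pushforward along $\gamma$ (the paper phrases this as first identifying the Cartier structure on $\gamma_\ast(\omega_R\otimes M)$ with the one induced via $F_\ast\gamma_\ast\cong\gamma_\ast F_\ast$, then checking that the adjoint of that induced structural map is $\gamma_\ast C$ followed by the map of Proposition \ref{DModPushforward}). Like the paper, you leave the final diagram chase as a formal verification, which is the same level of detail the authors themselves give.
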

\begin{proof}
We shorten $\omega_R \otimes M$ to $M'$. It is easy to see that the Cartier structure given on $\gamma_\ast M'$ is the one induced from $M'$ by \[\begin{xy} \xymatrix@1{F_\ast \gamma_\ast M' \ar[r]^\sim & \gamma_\ast F_\ast M' \ar[r]^{ \gamma_\ast \kappa_{M'}} &\gamma_\ast M'}\end{xy}.\]
Hence, one may reduce the problem to checking that given a Cartier module $(A, \kappa)$ with adjoint $C$ the adjoint of the structural map of $\gamma_\ast A$ is given by the composition of the map described in Proposition \ref{DModPushforward} with $\gamma_\ast C$. This is an easy computation which will be left to the reader.
\end{proof}

\begin{Le}
\label{GraphEmbeddedLeftRightDThetaModules}
Let $R$ be smooth over a perfect field and $f \in R$ a hypersurface. Let $\gamma: \Spec R \to \Spec R[t]$ be the graph embedding along $f$. Given an $R$-module $M$ we have \[\omega_{R[t]} \otimes (\mathcal{D}^e_R[\theta_1, \theta_{p}, \ldots, \theta_{p^{e-1}}] \gamma_\ast M \otimes \omega_{R[t]/R}^{-1})  = (\gamma_\ast \omega_{R} \otimes  M)\mathcal{D}^e_R[\theta_1, \theta_{p}, \ldots, \theta_{p^{e-1}}].\]
\end{Le}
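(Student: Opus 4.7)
The plan is to recognize that both sides of the claimed equality are submodules of a common ambient $R[t]$-module, and that the left and right actions of the subalgebra $\mathcal{D}^e_R[\theta_1, \theta_p, \ldots, \theta_{p^{e-1}}]$ on this ambient module correspond to each other under the formal-adjoint anti-involution. By the preceding lemma, tensoring the root $\gamma_\ast M \otimes \omega_{R[t]/R}^{-1} \hookrightarrow {F^e}^\ast(\gamma_\ast M \otimes \omega_{R[t]/R}^{-1})$ with $\omega_{R[t]}$ over $R[t]$ identifies it with the image of $\gamma_\ast \omega_R \otimes M$ in ${F^e}^!(\gamma_\ast \omega_R \otimes M)$; so both sides of the statement are submodules of $\omega_{R[t]}\otimes {F^e}^\ast(\gamma_\ast M \otimes \omega_{R[t]/R}^{-1}) \cong {F^e}^!(\gamma_\ast \omega_R \otimes M)$ generated by the \emph{same} subset, one under a left action and the other under a right action.

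Next, I would invoke Proposition \ref{EulerLeftRightInterchange}, which says precisely that the isomorphism just used is the standard left-right $\mathcal{D}^e_{R[t]}$-module equivalence: under it, the right action of an operator $P$ corresponds to the left action of its formal adjoint $P^t$, and explicitly $\theta_{p^l}$ on the right corresponds to $-\vartheta_{p^l}$ on the left. Since $-\vartheta_{p^l} = -1 - \theta_{p^l}$ by Lemma \ref{DiffOpFormulae}(a), and since for $P \in \mathcal{D}^e_R$ (an operator in the $R$-coordinates only) the adjoint $P^t$ is again in $\mathcal{D}^e_R$, the anti-homomorphism $P \mapsto P^t$ maps the set of generators $\mathcal{D}^e_R \cup \{\theta_1, \theta_p, \ldots, \theta_{p^{e-1}}\}$ into the subring $\mathcal{D}^e_R[\theta_1, \theta_p, \ldots, \theta_{p^{e-1}}]$ itself. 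Because $(PQ)^t = Q^tP^t$, this already forces the subring $\mathcal{D}^e_R[\theta_1, \theta_p, \ldots, \theta_{p^{e-1}}]$ to be stable under the transpose.

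Given this stability, the right submodule of ${F^e}^!(\gamma_\ast \omega_R \otimes M)$ generated by $\gamma_\ast \omega_R \otimes M$ under $\mathcal{D}^e_R[\theta_1, \ldots, \theta_{p^{e-1}}]$ coincides, under the transpose isomorphism, with $\omega_{R[t]}$ tensored with the left submodule of ${F^e}^\ast(\gamma_\ast M \otimes \omega_{R[t]/R}^{-1})$ generated by $\gamma_\ast M \otimes \omega_{R[t]/R}^{-1}$ under the same subalgebra; this is exactly the desired equality. The only point requiring genuine care is the verification that the subring $\mathcal{D}^e_R[\theta_1, \theta_p, \ldots, \theta_{p^{e-1}}]$ is preserved by the transpose, but this is transparent from Proposition \ref{EulerLeftRightInterchange} together with Lemma \ref{DiffOpFormulae}(a); everything else is formal bookkeeping on top of the preceding lemma.
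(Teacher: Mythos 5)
Your proposal is correct and follows essentially the same route as the paper: both reduce to identifying the two sides inside the common ambient module ${F^e}^!(\gamma_\ast\omega_R\otimes M)\cong \omega_{R[t]}\otimes{F^e}^\ast(\gamma_\ast M\otimes\omega_{R[t]/R}^{-1})$ via Proposition \ref{EulerLeftRightInterchange}, with the decisive observation being that $\mathcal{D}^e_R[\theta_1,\ldots,\theta_{p^{e-1}}]$ is stable under the formal adjoint since $(\theta_{p^i})^t=-\vartheta_{p^i}=-(1+\theta_{p^i})$ by Lemma \ref{DiffOpFormulae}(a). This matches the paper's proof in both structure and the key computation.
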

\begin{proof}
According to our established abuse of notation we have to show that for $m$ in the image of $\gamma_\ast M \otimes \omega_{R[t]/R}^{-1} \to {F^e}^\ast \gamma_\ast M \otimes \omega_{R[t]/R}^{-1}$ one has that $\omega \otimes P \cdot m$ for any $P \in D^e_R[\theta_1, \ldots, \theta_{p^{e-1}}]$ is contained in the $\mathcal{D}^e_R[\theta_1, \ldots, \theta_{p^{e-1}}]$-module generated by the image of $\gamma_\ast (\omega_R \otimes M) \to {F^e}^! \gamma_\ast( \omega_R \otimes M)$ and vice versa. We may verify this locally and then it follows from Proposition \ref{EulerLeftRightInterchange} and the fact that $\mathcal{D}^e_R[\theta_1, \ldots, \theta_{p^{e-1}}]$ is closed unter taking adjoints. This is clear for $\mathcal{D}^e_R$ and for $\theta_{p^i}$ one has for the transposed operator $(\theta_{p^i})^t = - \vartheta_{p^i} = -(1 + \theta_{p^i})$ by Lemma \ref{DiffOpFormulae} (i).
\end{proof}

\begin{Ko}
\label{LeftRightIsomQuotients}
In the situation of Lemma \ref{GraphEmbeddedLeftRightDThetaModules} we have an isomorphism
\begin{align*}(&\mathcal{D}^e_R[\theta_1, \ldots, \theta_{p^{e-1}}] \gamma_\ast M \otimes \omega_{R[t]/R}^{-1}/\mathcal{D}^e_R[ \theta_1, \ldots, \theta_{p^{e-1}}]t \gamma_\ast M \otimes \omega_{R[t]/R}^{-1}) \otimes \omega_{R[t]} \cong\\ & (\gamma_\ast\omega_{R} \otimes  M)\mathcal{D}^e_R[\theta_1, \ldots, \theta_{p^{e-1}}]/(\gamma_\ast \omega_R \otimes M)t\mathcal{D}^e_R[\theta_1, \ldots, \theta_{p^{e-1}}].\end{align*}
\end{Ko}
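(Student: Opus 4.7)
The plan is to derive the corollary from Lemma \ref{GraphEmbeddedLeftRightDThetaModules} by simply checking that tensoring with the invertible sheaf $\omega_{R[t]}$ identifies the numerators as in the lemma and also identifies the denominators (the $t$-submodules). Since tensoring with the line bundle $\omega_{R[t]}$ is exact, the asserted isomorphism of quotients will then follow formally.

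The first step is to apply Lemma \ref{GraphEmbeddedLeftRightDThetaModules} directly to obtain the identification of the two numerators, namely
\[
 \bigl(\mathcal{D}^e_R[\theta_1,\dots,\theta_{p^{e-1}}]\,\gamma_\ast M\otimes\omega_{R[t]/R}^{-1}\bigr)\otimes\omega_{R[t]}
 = (\gamma_\ast\omega_R\otimes M)\,\mathcal{D}^e_R[\theta_1,\dots,\theta_{p^{e-1}}].
\]
The second step is to identify the denominators. Here I would use Lemma \ref{DiffOpFormulae}(h), which gives the commutation relation $\mathcal{D}^e_R[\theta_1,\dots,\theta_{p^{e-1}}]\,t = t\,\mathcal{D}^e_R[\theta_1,\dots,\theta_{p^{e-1}}]$. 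Hence the left-hand denominator can be rewritten as $\mathcal{D}^e_R[\theta_1,\dots,\theta_{p^{e-1}}](t\,\gamma_\ast M)\otimes\omega_{R[t]/R}^{-1}$. Because $\gamma$ is the graph embedding along $f$, multiplication by $t$ on $\gamma_\ast M$ coincides with the pushforward of multiplication by $f$ on $M$, i.e.\ $t\cdot\gamma_\ast M = \gamma_\ast(fM)$. Thus the left-hand denominator, tensored with $\omega_{R[t]}$, becomes
\[
 \bigl(\mathcal{D}^e_R[\theta_1,\dots,\theta_{p^{e-1}}]\,\gamma_\ast(fM)\otimes\omega_{R[t]/R}^{-1}\bigr)\otimes\omega_{R[t]},
\]
to which Lemma \ref{GraphEmbeddedLeftRightDThetaModules} applies verbatim (with $fM$ in place of $M$) and yields $(\gamma_\ast\omega_R\otimes fM)\,\mathcal{D}^e_R[\theta_1,\dots,\theta_{p^{e-1}}]$. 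Symmetrically, on the right-hand side the action of $t$ on $\gamma_\ast\omega_R\otimes M$ inside ${F^e}^!(\gamma_\ast\omega_R\otimes M)$ agrees with multiplication by $f$ on $M$, so $(\gamma_\ast\omega_R\otimes M)\,t = \gamma_\ast\omega_R\otimes fM$, and hence the right-hand denominator equals $(\gamma_\ast\omega_R\otimes fM)\,\mathcal{D}^e_R[\theta_1,\dots,\theta_{p^{e-1}}]$ as well.

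The final step is purely formal: $\omega_{R[t]}$ is an invertible $R[t]$-module, so $-\otimes\omega_{R[t]}$ is exact and commutes with passing to quotients. Combining the numerator identification with the denominator identification yields the asserted isomorphism of quotient right $\mathcal{D}^e_R[\theta_1,\dots,\theta_{p^{e-1}}]$-modules.

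The only non-routine point is the denominator matching. This is not really an obstacle but does require the two small observations used above: Lemma \ref{DiffOpFormulae}(h) to move $t$ out past the operators $\theta_{p^i}$, and the identification $t\cdot(-)=\gamma_\ast f(-)$ on both the left and right side, which holds because the right $R[t]$-action on ${F^e}^!(\gamma_\ast\omega_R\otimes M)$ and the left $R[t]$-action on ${F^e}^\ast(\gamma_\ast M\otimes\omega_{R[t]/R}^{-1})$ both restrict, on the image of the natural map from $\gamma_\ast(-)$, to the module structure coming from the graph embedding.
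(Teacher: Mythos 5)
Your proposal is correct and follows essentially the same route as the paper: identify numerator and denominator separately via Lemma \ref{GraphEmbeddedLeftRightDThetaModules} together with the identification of the $t$-action with multiplication by $f$ under the graph embedding, and then conclude by tensoring the resulting short exact sequence with the invertible module $\omega_{R[t]}$. Your write-up merely spells out the denominator matching (via Lemma \ref{DiffOpFormulae}(h) and $t\gamma_\ast M=\gamma_\ast(fM)$) that the paper compresses into one sentence.
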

\begin{proof}
Note that $(t \gamma_\ast M) \mathcal{D}^e_R[\theta_1, \ldots, \theta_{p^{e-1}}] = \gamma_\ast (fM \mathcal{D}^e_R) \mathcal{D}^e_R[\theta_1, \ldots, \theta_{p^{e-1}}]$ so that the claim follows from Lemma \ref{GraphEmbeddedLeftRightDThetaModules} and tensoring the obvious short exact sequence with $\omega_S$.
\end{proof}

\begin{Ko}
\label{EulerEigenvaluesonQuotientLeftRight}
The $\mathcal{D}^e_R[\theta_1, \ldots, \theta_{p^{e-1}}]$-module \[D^e_R[\theta_1, \ldots, \theta_{p^{e-1}}] \gamma_\ast M \otimes \omega_{R[t]/R}^{-1} /D^e_R[ \theta_1, \ldots, \theta_{p^{e-1}}]t \gamma_\ast M \otimes \omega_{R[t]/R}^{-1} \] has a non-trivial $\vartheta$-eigenspace with eigenvalue $(-i_1, \ldots, -i_{e})$ if and only if the right $\mathcal{D}^e_R[\theta_1, \ldots, \theta_{p^{e-1}}]$-module \[(\gamma_\ast \omega_R \otimes M)D^e_R[\theta_1, \ldots, \theta_{p^{e-1}}]/(\gamma_\ast \omega_R \otimes M)tD^e_R[\theta_1, \ldots, \theta_{p^{e-1}}]\] has a non-trivial $\theta$-eigenspace with eigenvalue $(i_1, \ldots, i_{e})$.
\end{Ko}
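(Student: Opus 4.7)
The plan is to combine two ingredients already assembled in the paper: the isomorphism of quotient modules in Corollary \ref{LeftRightIsomQuotients}, which identifies the two relevant quotients up to the left-right equivalence $-\otimes\omega_{R[t]}$, and the sign flip of Euler eigenvalues under this equivalence established in Proposition \ref{EulerLeftRightInterchange} (already packaged at the level of the ambient modules as Corollary \ref{EigenspaceEulerLeftRight}). In fact the present statement should just be read as the quotient-level incarnation of that corollary.

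First I would rephrase the claim as a statement purely about the equivalence between left and right $\mathcal{D}^e_{R[t]}$-modules. By Corollary \ref{LeftRightIsomQuotients} the right quotient appearing in the statement is obtained from the left quotient by tensoring with $\omega_{R[t]}$. It therefore suffices to compare, for a left $\mathcal{D}^e_{R[t]}$-module $N$, the simultaneous eigenspaces of the $\vartheta_{p^l}$ acting from the left on $N$ with the simultaneous eigenspaces of the $\theta_{p^l}$ acting from the right on $N\otimes\omega_{R[t]}$. Both the $\theta_{p^l}$ and the $\vartheta_{p^l}$ lie in $\mathcal{D}^e_R[\theta_1,\ldots,\theta_{p^{e-1}}]$ by Lemma \ref{DiffOpFormulae}(a), so the subalgebra through which we test eigenvalues is preserved on both sides.

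Next I would invoke Proposition \ref{EulerLeftRightInterchange}: the local formula $(1\otimes v \otimes m)\cdot \theta_{p^l} = 1\otimes v \otimes (-\vartheta_{p^l}\, m)$ shows that $m\in N$ satisfies $\vartheta_{p^l} m = -i_l\, m$ for all $0\leq l\leq e-1$ if and only if the corresponding element of $N\otimes \omega_{R[t]}$ is a right eigenvector for $\theta_{p^l}$ with eigenvalue $i_l$ for each $l$. Applying this criterion to the quotient module identified in the first step converts a non-trivial $(-i_1,\ldots,-i_e)$-eigenspace on the left into a non-trivial $(i_1,\ldots,i_e)$-eigenspace on the right and vice versa, which is exactly the claim.

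The main point requiring care --- rather than a real obstacle --- is the sign bookkeeping: one has to be sure that the conventional identification of $\mathbb{F}_p$ with $\{0,\ldots,p-1\}$ is carried consistently through the sign flip, so that eigenvalues $(-i_1,\ldots,-i_e)$ on the $\vartheta$-side correspond unambiguously to eigenvalues $(i_1,\ldots,i_e)$ on the $\theta$-side, and in checking that passing from $\gamma_\ast M\otimes\omega_{R[t]/R}^{-1}$ to its image in $(F^e)^\ast\gamma_\ast M\otimes\omega_{R[t]/R}^{-1}$ is compatible with the $\omega_{R[t]}$-twist used in Corollary \ref{LeftRightIsomQuotients}. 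Once these formalities are lined up, the corollary is a direct consequence of Corollaries \ref{LeftRightIsomQuotients} and \ref{EigenspaceEulerLeftRight}.
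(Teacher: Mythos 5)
Your proposal is correct and follows essentially the same route as the paper: the paper's proof likewise combines Corollary \ref{LeftRightIsomQuotients} with the eigenvalue sign flip of Proposition \ref{EulerLeftRightInterchange}/Corollary \ref{EigenspaceEulerLeftRight}, handling the passage from ambient modules to the quotients by noting that both quotients embed into the corresponding quotients by $\gamma_\ast M \otimes \omega_{R[t]/R}^{-1}$ (resp.\ its right-module counterpart). The compatibility issues you flag at the end are exactly what that embedding remark takes care of.
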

\begin{proof}
Note that the left module embeds into $\mathcal{D}^e_R \gamma_\ast M \otimes \omega_{R[t]/R}^{-1}/\gamma_\ast M \otimes \omega_{R[t]/R}^{-1}$ and similarly for the right module. Then the claim follows from Corollaries \ref{EigenspaceEulerLeftRight} and \ref{LeftRightIsomQuotients}
\end{proof}

\begin{Bem}
One should be able to obtain a similar correspondence between right and left $\mathcal{D}^e_R[\theta_1, \ldots, \theta_{p^{e-1}}]$-modules under the weaker assumption that $R$ is regular, essentially of finite type over an $F$-finite field. However, in this case one cannot appeal to Berthelot's results. Since Stadnik works under the assumptions that $R$ is smooth over a perfect field we did not pursue this further.
\end{Bem}

We now have the necessary ingredients to state and prove the main results of this section.

\begin{Theo}
\label{PWeightedLimitExists}
Let $\gamma: \Spec R \to \Spec R[t]$ be the graph embedding along a hypersurface $f$ and let $(M, \Phi)$ be a root of a unit $R[F]$-module $\mathcal{M}$. Assume that the Cartier module $M \otimes \omega_R$ is $F$-regular\footnote{If $M$ denotes the unique minimal root of $\mathcal{M}$ in the sense of \cite[Definition 2.7]{blicklegammasheaves} then this just means that $M$ is generically simple. That is, any submodule $N$ of $M$ for which $N \to F^\ast M$ factors through $N \to F^\ast N$ which agrees at all generic points of $\Supp M$ with $M$ coincides with $M$.} and that $f$ is not a zero-divisor on $M$.
Then if $\mu_e$ denotes the base-$p$ expansions of the eigenvalues of the $\theta_{p^l}$ operating on the left modules \[\mathcal{D}^e_R[\theta_1, \ldots, \theta_{p^{e-1}}] \gamma_\ast M \otimes \omega_{R[t]/R}^{-1} /\mathcal{D}^e_R[\theta_1, \ldots, \theta_{p^{e-1}}] \gamma_\ast tM \otimes \omega_{R[t]/R}^{-1}\] and $\lambda_e$ denotes the base-$p$ expansions of the eigenvalues of $\theta_{p^l}$ operating on the right modules \[(\gamma_\ast \omega_R \otimes M) \mathcal{D}^e_R[\theta_1, \ldots, \theta_{p^{e-1}}]/ (\gamma_\ast \omega_R \otimes M)t\mathcal{D}^e_R[\theta_1, \ldots, \theta_{p^{e-1}}]\] one has the following relation $\mu_e + \lambda_e = p^e - 1$ and \[\lim_{e \to \infty} \frac{\mu_e}{p^e} = 1 - \lim_{e \to \infty} \frac{\lambda_e}{p^e}.\] In particular, the $p$-weighted limit of the $\mu_e$ exists.
\end{Theo}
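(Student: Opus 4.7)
The plan is to reduce the theorem to two preparatory results already available: the left--right eigenvalue comparison in Corollary~\ref{EulerEigenvaluesonQuotientLeftRight} and the main theorem (Theorem~\ref{MainResult}) applied to the Cartier module $\omega_R \otimes M$.

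First, I would observe that Corollary~\ref{EulerEigenvaluesonQuotientLeftRight} produces a bijection between the non-trivial $\vartheta$-eigenspaces with eigenvalues $(-i_1,\ldots,-i_e)$ on the left quotient module and the non-trivial $\theta$-eigenspaces with eigenvalues $(i_1,\ldots,i_e)$ on the right quotient module. Since the theorem is phrased in terms of $\theta$-eigenvalues on both sides, I would convert $\vartheta$-eigenvalues to $\theta$-eigenvalues using Lemma~\ref{DiffOpFormulae}(a), which gives $\vartheta_{p^l} = 1 + \theta_{p^l}$; so a $\vartheta_{p^l}$-eigenvector with eigenvalue $-i_l$ is a $\theta_{p^l}$-eigenvector with eigenvalue $-i_l - 1 \equiv p-1-i_l \pmod p$. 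Lifting back to $\{0,\ldots,p-1\}$ and summing the base-$p$ digits yields $\mu_e = \sum_{l=1}^{e}(p-1-i_l)p^{l-1} = p^e - 1 - \lambda_e$, which is the first claimed identity. Dividing by $p^e$ immediately gives $\mu_e/p^e = 1 - 1/p^e - \lambda_e/p^e$, so the existence and value of $\lim_e \mu_e/p^e$ is equivalent to the existence and value of $\lim_e \lambda_e/p^e$.

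Second, to produce the limit I would match the right module appearing in the theorem with the module $N$ of Definition~\ref{BSPDefinition}, applied to the right $\mathcal{D}_R$-module coming from the Cartier module $\omega_R \otimes M$. For this one needs only that $\mathcal{D}_R^e[\theta_1,\ldots,\theta_{p^{e-1}}]t = t\mathcal{D}_R^e[\theta_1,\ldots,\theta_{p^{e-1}}]$ (Lemma~\ref{DiffOpFormulae}(h)) so that the two presentations of the denominator agree, plus the fact that $t$ acts as $f$ after the graph embedding. The hypotheses on $M$ translate exactly into the hypotheses of Theorem~\ref{MainResult} for $\omega_R \otimes M$: it is $F$-regular by assumption, and $f$ is a non zero-divisor (tensoring with the invertible sheaf $\omega_R$ preserves this). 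Therefore, for $e\gg 0$, the eigenvalues $\lambda_e/p^e$ coincide with $\{(\lceil \lambda p^e\rceil - 1)/p^e\}$ as $\lambda$ ranges over the $F$-jumping numbers of $\tau(\omega_R \otimes M, f^t)$ in $(0,1]$. Fixing a jumping number $\lambda$ and the corresponding matched sequence of eigenvalues in $e$, clearly $\lim_e (\lceil\lambda p^e\rceil - 1)/p^e = \lambda$, so the $p$-weighted limit exists and equals $\lambda$. Combining with step one yields $\lim_e \mu_e/p^e = 1 - \lambda$.

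The only genuinely delicate point I would need to argue carefully is the bookkeeping in step one: keeping track of which operator ($\theta$ versus $\vartheta$) governs which side, which sign to invert, and how digits in $\{0,\ldots,p-1\}$ behave under the map $i\mapsto -1-i \pmod p$. Everything else is a direct application of results already in the paper; in particular the subtler assertion that the roots of the Bernstein-Sato polynomials stabilize to the $F$-jumping numbers is already encapsulated in Theorem~\ref{MainResult}, so no further test module computations are required.
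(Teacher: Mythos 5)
Your proposal is correct and follows essentially the same route as the paper: Corollary~\ref{EulerEigenvaluesonQuotientLeftRight} together with $\vartheta_{p^l}=1+\theta_{p^l}$ gives the digit-wise relation $\mu_e=\sum_l(p-1-i_l)p^{l-1}$, hence $\mu_e+\lambda_e=p^e-1$, and Theorem~\ref{MainResult} applied to the $F$-regular Cartier module $\omega_R\otimes M$ supplies the existence of $\lim_e\lambda_e/p^e$ as an $F$-jumping number. Your extra bookkeeping (Lemma~\ref{DiffOpFormulae}(h) for the denominator and the hypothesis transfer to $\omega_R\otimes M$) only makes explicit what the paper leaves implicit.
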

\begin{proof}
If $\theta_{p^l}$ operates from the left with eigenvalue $-i_l -1$ then by Corollary \ref{EulerEigenvaluesonQuotientLeftRight} and the relation $\theta_{p^l} +1 = \vartheta_{p^l}$ we have that $\theta_{p^l}$ operates via $i_l$ on the right. So we get $\mu_e = \sum_{l=0}^{e-1} (p-1-i_l)p^l$ as the base-$p$ expansion for the operation of the $\theta_{p^l}$ on the left. Similarly, we have $\lambda_e = \sum_{l=0}^{e-1} i_l p^l$.

By Theorem \ref{MainResult} the $p$-weighted limit over the $\lambda_e$ exists. Moreover, \[\lim_{e \to \infty} \sum_{l=0}^{e-1}\frac{(p-1)p^l}{p^e} = 1\] so that the claim follows.
\end{proof}


We will denote the limit for $e \to \infty$ of the polynomials $b^e_{M,f}(s)$ introduced in Definition \ref{BSPDefinition} by $b_{M,f}(s)$. With this notation we can now compare Stadnik's notion of Bernstein-Sato polynomial to our notion:

\begin{Ko}
Assume the situation of Theorem \ref{PWeightedLimitExists}. Let $\lambda_1, \ldots, \lambda_m$ be $p$-weighted limits of the base-$p$ expansions of the eigenvalues of the $\theta_{p^l}$ acting on \[   (\gamma_\ast \omega_R \otimes M)D^e_R[\theta_1, \ldots, \theta_{p^{e-1}}]/(\gamma_\ast \omega_R \otimes M)tD^e_R[\theta_1, \ldots, \theta_{p^{e-1}}].                                                                                                                    \]
 Then $\tilde{b}_{M,f}(s) = \prod_i (s - \lambda_i)$. In particular, $\tilde{b}_{M,f}(s) = b_{M,f}(s)$
\end{Ko}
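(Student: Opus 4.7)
The plan is to prove equality by mutual divisibility, using that $\tilde{b}_{M,f}(s)$ is by definition the monic generator of the ideal of $b$-functions. The companion identity $b_{M,f}(s) = \prod_i(s-\lambda_i)$ is immediate: $b_{M,f}(s)$ is the limit of the $b^e_{M,f}(s)$, and Theorem \ref{MainResult} gives their roots explicitly as $(\lceil\lambda_i p^e\rceil-1)/p^e \to \lambda_i$.

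For the divisibility $\tilde{b}_{M,f}(s) \mid \prod_i(s-\lambda_i)$, I would check directly that $\prod_i(s-\lambda_i)$ is itself a $b$-function in Stadnik's sense. Its roots lie in $(0,1]$ by hypothesis, and the roots of $\prod_i(s-\lambda_i)(1-s)$ are precisely the $1-\lambda_i$. By Theorem \ref{PWeightedLimitExists} combined with Theorem \ref{MainResult}, for $e$ above some threshold $e_0$ the base-$p$ expansions of left-side eigenvalues are exactly $\mu_e^{(i)} = p^e - \lceil\lambda_i p^e\rceil$ as $\lambda_i$ ranges over the $F$-jumping numbers. A brief case analysis, splitting on whether $\lambda_i p^e \in \mathbb{Z}$, shows $\lceil(1-\lambda_i)p^e\rceil - \mu_e^{(i)} \in \{0,1\}$, so each $\mu_e^{(i)}$ already lies in Stadnik's set associated with the root $1-\lambda_i$. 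At levels $e < e_0$ only finitely many base-$p$ expansions (each bounded by $p^{e_0}$) can occur, so enlarging $n$ beyond $e_0$ absorbs these exceptions: choosing the root $1-\lambda_i$ with $\lambda_i$ smallest makes $\lceil(1-\lambda_i)p^e\rceil$ large enough to dominate every such bounded eigenvalue. Hence $\prod_i(s-\lambda_i)$ is a $b$-function, so $\tilde{b}_{M,f}(s)$ divides it.

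For the reverse divisibility, let $b(s)$ be any $b$-function with roots of $b(1-s)$ equal to $\lambda^{(1)},\ldots,\lambda^{(k)}$ and let $n$ be the associated uniform bound. Fix an $F$-jumping number $\lambda_i$; for each $e \gg 0$ Stadnik's condition applied to the left-eigenvalue $\mu_e^{(i)}$ supplies an index $j_e \in \{1,\ldots,k\}$ and an integer $0 \leq a_e \leq p^n$ with $\mu_e^{(i)} = \lceil\lambda^{(j_e)}p^e\rceil - a_e$. Since the $j_e$ lie in a finite set, pigeonhole yields a subsequence along which $j_e = j$ is constant. Dividing by $p^e$ and letting $e \to \infty$ along this subsequence, the contribution $a_e/p^e$ vanishes and we obtain $1-\lambda_i = \lim_e \mu_e^{(i)}/p^e = \lambda^{(j)}$. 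Thus each $1-\lambda_i$ is a root of $b(1-s)$, equivalently each $\lambda_i$ is a root of $b$; as the $\lambda_i$ are distinct, $\prod_i(s-\lambda_i)$ divides $b$. Applying this with $b = \tilde{b}_{M,f}$ yields the reverse divisibility, and since both polynomials are monic this forces equality.

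The main obstacle is the small-$e$ case of the first divisibility. Theorem \ref{MainResult} only asserts the clean correspondence between eigenvalues and $F$-jumping numbers for $e \gg 0$, while Stadnik's condition requires a single $n$ valid for all $e \geq 0$. The finite, bounded nature of the low-level data reduces this to a technical covering verification, but one must check that every small-level eigenvalue actually falls within some set $\{\lceil(1-\lambda_i)p^e\rceil - a : 0 \leq a \leq p^n\}$ for a well-chosen $i$ and a sufficiently large $n$; this is the only nontrivial calculation in the proof.
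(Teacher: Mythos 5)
Your proposal follows the same two-step strategy as the paper's own proof: first show that $\prod_i(s-\lambda_i)$ satisfies Stadnik's condition (so that $\tilde{b}_{M,f}$ divides it), then show by a pigeonhole-and-limit argument that no root may be omitted. The second step and the large-$e$ part of the first step are correct, and in fact more explicit than the paper's version: the translation between left and right eigenvalues via Theorem \ref{PWeightedLimitExists} and the computation that $a\in\{0,1\}$ according to whether $\lambda_i p^e\in\mathbb{Z}$ are exactly what the paper leaves implicit.

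The one genuine issue is the covering at small levels $e$, which you correctly isolate but do not resolve, and your suggested patch does not work as stated. Stadnik's condition requires $0\leq a\leq p^n$, so enlarging $n$ only absorbs eigenvalues lying \emph{below} $\lceil(1-\lambda_i)p^e\rceil$; a left eigenvalue $\nu$ at a small level with $\nu>\max_i\lceil(1-\lambda_i)p^e\rceil$ (equivalently a right eigenvalue $m$ with $(m+1)/p^e$ strictly below the smallest jumping number) would force $a<0$ and is not covered by any of the sets, no matter how large $n$ is. What is actually needed is a statement such as $\kappa^e f^m M=\kappa^e f^{m+1}M$ whenever $(m+1)/p^e$ is smaller than the least jumping number, and this does not follow formally from Theorem \ref{MainResult}, which identifies the eigenvalues with truncations of jumping numbers only for $e\gg 0$. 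To be fair, the paper's own proof elides precisely this point: it asserts that the eigenvalue expansions equal $\lceil\lambda p^e\rceil-1$ ``for all $e\geq 1$'', even though the remark following Theorem \ref{MainResult} concedes that the identification is only established for $e\gg 0$. So your proposal reproduces the published argument, including its weakest step, and is commendably honest about where the remaining work lies; only the specific mechanism you propose for absorbing the small-$e$ eigenvalues is incorrect and would need to be replaced by an actual bound on which eigenvalues can occur at low levels.
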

\begin{proof}
First of all, we use Theorem \ref{PWeightedLimitExists} to ensure that the $p$-weighted limits actually do exist. Recall from the discussion before Theorem \ref{MainResult} that for all $e \geq 1$ the base-$p$ expansions of the eigenvalues of $\theta$ are given by $\lceil \lambda p^e\rceil - 1$. This implies that $b(s)$ is a Bernstein-Sato polynomial.

In the other direction we have to show that $b(s)$ is minimal in the sense that we may not omit any of the $\lambda_i$. Assume that we have omitted $\lambda_m$ and for some $n \geq 0$ the set $\{\lceil \lambda_m p^e \rceil - a \, \vert \, 0 \leq a \leq p^n \}$ is contained in the set $\{\lceil \lambda_i p^e \rceil - a \, \vert \, 0 \leq a \leq p^n, 1 \leq i \leq m-1 \}$. In particular the $\lceil \lambda_m p^e \rceil -1$ are all contained in this set. Since all parameters except $e$ of this set are finite we may assume that $\lceil \lambda_m p^e \rceil -1 = \lceil \lambda_i p^e \rceil - a$ for some fixed $a, i$ and infinitely many $e$. Dividing by $p^e$ and passing to the limit $e \to \infty$ yields $\lambda_i = \lambda_m$ -- a contradiction.
\end{proof}

\bibliography{bibliothek}
\bibliographystyle{amsplain}
\end{document}